\documentclass{amsart}
\usepackage{amsmath, amsfonts, amssymb}

\def\a{\alpha}

\def\g{\gamma}
\def\G{\Gamma}


\newcommand{\mB}{\mathcal{B}}
\newcommand{\mC}{\mathcal{C}}

\newcommand{\mH}{\mathcal{H}}

\newcommand{\mM}{\mathcal{M}}


\newcommand{\fp}{\mathfrak{p}}


\newcommand{\bfC}{\mathbf{C}}

\newcommand{\bfG}{\mathbf{G}}

\newcommand{\bfQ}{\mathbf{Q}}
\newcommand{\bfR}{\mathbf{R}}

\newcommand{\bfT}{\mathbf{T}}

\newcommand{\bfZ}{\mathbf{Z}}

\newcommand{\bff}{\mathbf{f}}

\newcommand{\bfi}{\mathbf{i}}

\newcommand{\Oo}{\mathcal{O}}

\newcommand{\OF}{\mathcal{O}_F}

\newcommand{\AF}{\mathbf{A}_F}

\newcommand{\AQ}{\mathbf{A}}

\newcommand{\AFf}{\mathbf{A}_{F,\textup{f}}}

\newcommand{\AQf}{\mathbf{A}_{\textup{f}}}



\newcommand{\tuf}{\textup{f}}

\newcommand{\tuM}{\textup{M}}


\newcommand{\ov}{\overline}


\newcommand{\be}{\begin{equation}}
\newcommand{\ee}{\end{equation}}
\newcommand{\bes}{\begin{equation*}}
\newcommand{\ees}{\end{equation*}}

\newcommand{\bs}{\begin{split}}
\newcommand{\es}{\end{split}}
\newcommand{\bss}{\begin{split*}}
\newcommand{\ess}{\end{split*}}

\newcommand{\bmat}{\left[ \begin{matrix}}
\newcommand{\emat}{\end{matrix} \right]}
\newcommand{\bsmat}{\left[ \begin{smallmatrix}}
\newcommand{\esmat}{\end{smallmatrix} \right]}

\newcommand{\bml}{\begin{multline}}
\newcommand{\eml}{\end{multline}}
\newcommand{\bmls}{\begin{multline*}}
\newcommand{\emls}{\end{multline*}}



\DeclareMathOperator{\Cl}{Cl}

\DeclareMathOperator{\Desc}{Desc}
\DeclareMathOperator{\diag}{diag}

\DeclareMathOperator{\Gal}{Gal}
\DeclareMathOperator{\GL}{GL}

\DeclareMathOperator{\GU}{GU}

\DeclareMathOperator{\Res}{Res}

\DeclareMathOperator{\SL}{SL}

\DeclareMathOperator{\U}{U}
\DeclareMathOperator{\val}{val}


\def\AQf{\mathbf{A}_{\textup{f}}}



\newcommand{\hs}{\hspace{2pt}}
\newcommand{\hf}{\hspace{5pt}}

\newcommand{\iy}{\infty}

\newcommand{\tr}{\textup{tr}\hspace{2pt}}

\usepackage[all]{xy}
\usepackage{amsthm}
\SelectTips{cm}{10}\UseTips
\bibliographystyle{amsplain}

\theoremstyle{plain}
\newtheorem{thm}{Theorem}
\newtheorem{prop}[thm]{Proposition}
\newtheorem{cor}[thm]{Corollary}
\newtheorem{lemma}[thm]{Lemma}

\theoremstyle{definition}
\newtheorem{definition}[thm]{Definition}

\newtheorem{rem}[thm]{Remark}

\numberwithin{thm}{section}
\numberwithin{equation}{section}

\begin{document}

\title[Adelic Maass spaces]{Adelic Maass spaces on $\U(2,2)$}
\author{Krzysztof Klosin}
\date{June 8, 2007}

\maketitle

\begin{abstract} Generalizing the results of \cite{Kojima}, 
\cite{Gritsenko90} and \cite{Krieg91}, we define an adelic version of the 
Maass 
space for hermitian modular forms of weight $k$ regarded as functions on 
adelic 
points of the quasi-split 
unitary group $\U(2,2)$ associated with an imaginary quadratic extension 
$F/\bfQ$ of discriminant $D_F$. When the class number $h_F$ of $F$ is odd, 
we show that the 
Maass 
space is invariant under the action of the local Hecke algebras of 
$\U(2,2)(\bfQ_p)$ for 
all $p \nmid D_F$. As a consequence we obtain a Hecke-equivariant 
injective map 
from the Maass space to the $h_F$-fold direct product of the space of 
elliptic 
modular forms $M_{k-1}(\G_0(D_F))$.
\end{abstract}

\section{Introduction}

In 1977 Saito and Kurokawa \cite{Kurokawa78} conjectured that cusp forms 
in $S_{2k-2}(\SL_2(\bfZ))$ can be lifted to Siegel modular forms of 
weight 
$k$ 
and that this lifting is Hecke-equivariant in an appropriate sense. This 
conjecture was proved in a series of papers by Maass \cite{Maass79}, 
Andrianov \cite{Andrianov79}, and Zagier \cite{Zagier81}, and later 
reformulated and proved using the language of representation theory by 
Piatetski-Shapiro \cite{Piatetski-Shapiro83}.
In the 1980s Kojima \cite{Kojima} and Gritsenko \cite{Gritsenko90} proved 
the
existence of a similar lifting from the space of modular forms of level 4
and non-trivial character to the space of hermitian modular forms.
The group $\U(2,2)$ for which the hermitian modular forms were defined was 
associated with
the extension $\bfQ(i)/\bfQ$. Following \cite{Kojima} we will refer to the 
image
of this lifting as the \textit{Maass space} and to the lifting itself as 
the \textit{Maass lifting}. In 1991 Krieg
\cite{Krieg91} extended the results of \cite{Kojima} and 
\cite{Gritsenko90} to imaginary quadratic fields and
showed Hecke-equivariance of the Maass lifting for a certain family of 
Hecke
operators living in the local Hecke algebra at $p$, for $p$ any
inert prime. As \cite{Kojima}, \cite{Gritsenko90} and \cite{Krieg91}
all treated hermitian modular forms as classical objects (i.e., as
functions defined on a higher-dimensional analogue of the complex upper
half-plane), their methods did not yield Hecke-equivariance of the lift 
under
the action of local Hecke algebras at split primes, except when the class
number of $F$ was one (\cite{Gritsenko90}).

In this article we define an adelic version of the Maass space by 
imposing a condition on the Fourier coefficients of hermitian modular 
forms regarded as functions on $\U(2,2)(\AQ)$ (Definition \ref{Maass 
form}). 
In fact our definition reduces to that of Krieg's when the
class number of $F$ is one. We also show that the adelic Maass space, 
which we 
denote by $\mM_k$, is invariant
under the action of local Hecke algebras at all primes $p$ not ramified 
in $F/\bfQ$ (Theorem \ref{thmmain}).

To prove Hecke-equivariance of the Maass space we assume that the class 
number $h_F$ of $F$ is odd, and
in that
case we show that $\mM_k$ is isomorphic to $h_F$
copies of
Krieg's Maass space (Proposition \ref{oddclass}), hence we obtain a 
lifting from
$M_{k-1}(D_F,
\chi_F)^{h_F}$ to $\mM_k$ together with a homomorphism from the Hecke
algebra of $\mM_k$ into the space of endomorphisms of $M_{k-1}(D_F,
\chi_F)^{h_F}$ generated by the classical Hecke operators $T_p$ (for $p$
split), $T^2_{p}$ (for $p$ inert) and the group of permutations of the
factors in $M_{k-1}(D_F,
\chi_F)^{h_F}$ (Theorems \ref{heckedesc} and \ref{heckedesc2}).

In \cite{Klosin06preprint} the author used Hecke-equivariance of the Maass 
lifting 
for the 
extension $\bfQ(i)/\bfQ$ proved in \cite{Gritsenko90} to construct 
congruences between Maass forms and non-Maass forms and as a consequence 
give evidence for the Bloch-Kato conjecture (Theorem 7.11 and Corollary 
9.9 in 
\cite{Klosin06preprint}). We hope to be able to use Theorem \ref{thmmain} 
to 
extend the results of \cite{Klosin06preprint} to all imaginary quadratic 
fields of 
odd class number.

We believe that our assumption on the class number of $F$ is unnecessary
and we make it here only to make the proofs simpler. We work with the
Hecke algebra for the algebraic group $\U(2,2)$ instead of $\GU(2,2)$
which is the case in \cite{Gritsenko90} and \cite{Krieg91}, but our
results easily imply analogous ones for $\GU(2,2)$. Finally we want to
point out that
the Maass lifting we obtain agrees with a lifting that has recently   
been defined by Ikeda
\cite{Ikedapreprint2005} using a different approach. Our result provides
an explicit description of the image of that lifting as well as explicit
formulas for the descent of Hecke algebras.

\section{Definitions}

Let $F$ be an imaginary quadratic field with ring of integers $\OF$.
We denote by $\Cl_F$ the class group of $F$ and set $h_F:= \# \Cl_F$. For
any affine group scheme $X$ over $\OF$ and any $\bfZ$-algebra $A$, we
denote by $x \mapsto \ov{x}$
the automorphism of $(\Res_{\OF/\bfZ}X)(A)$ induced by the non-trivial
automorphism of
$F/\bfQ$. Note that $(\Res_{\OF/\bfZ}X)(A)$ can be identified with a 
subgroup of $\GL_n(A \otimes \OF)$ for some $n$. In what follows we 
always specify such an identification. Then for $x \in 
(\Res_{\OF/\bfZ}X)(A)$ we write $x^t$ for the transpose of $x$, and set 
$x^*:= \ov{x}^t$ and $\hat{x}:= (\ov{x}^t)^{-1}$. Moreover, we write 
$\diag(a_1, a_2, \dots, a_n)$ for the $n\times n$-matrix with $a_1, a_2, 
\dots a_n$ on the diagonal and all the off-diagonal entries equal to zero.

To the extension $F/\bfQ$ we associate the quasi-split unitary group
$$\U(n,n):= \{ g \in \Res_{\OF/\bfZ} \GL_{2n/\OF} \mid g J_n g^* = 
J_n\},$$
where $J_n=\bsmat & -I_n \\
I_n\esmat$ and $I_n$ stands for the $n \times n$ identity matrix. For $q 
\in \Res_{\OF/\bfZ}\GL_{n/\OF}$, we set $p_q:= \bsmat q \\
& \hat{q} \esmat \in \U(n,n)$. We will also write $G$ for $\U(2,2)$ and 
$J$ for $J_2$.

For a prime ideal $\fp$ of $F$, let $F_{\fp}$ denote the completion of $F$ 
with respect to the valuation induced by $\fp$. Set $F_p:= F 
\otimes_{\bfQ} \bfQ_p$ and $\Oo_{F,p}= \OF \otimes_{\bfZ} \bfZ_p$. Note 
that if 
$p$ is 
inert or ramified in $F$, then $F_p/\bfQ_p$ is a degree two extension of 
local fields and $a \mapsto \ov{a}$ induces the non-trivial automorphism 
in $\Gal(F_p/\bfQ_p)$, while if $p$ splits in $F$, 
then $F_p \cong \bfQ_p \times \bfQ_p$, and $a \mapsto \ov{a}$ corresponds 
on the right-hand side to the automorphism defined by $(a,b) \mapsto 
(b,a)$. We 
denote the isomorphism $\bfQ_p \times 
\bfQ_p \xrightarrow{\sim} F_p$ by $\iota_p$. For a matrix $g=(g_{ij})$ 
with entries 
in $\bfQ_p \times \bfQ_p$ we also 
set $\iota_p(g)= ((\iota_p(g_{ij}))$. For a split prime $p$ the map 
$\iota_p^{-1}$ identifies $G(\bfQ_p)$ with $$G_p =
\{(g_1,g_2) \in \GL_{4}(\bfQ_p) 
\times \GL_{4}(\bfQ_p) \mid g_1 J g_2^t = J \}.$$ Note that the map 
$(g_1,g_2) \mapsto g_1$ gives a (non-canonical) isomorphism $G(\bfQ_p) 
\cong \GL_4(\bfQ_p)$.

Denote by $\AQ$ (resp. $\AQf$, $\AF$, $\AFf$) the ring of adeles of 
$\bfQ$ (resp. finite adeles of $\bfQ$, adeles of $F$, finite adeles of 
$F$). For an adele $a$, we write $a_{\tuf}$ for its finite part. 
Set $\hat{\bfZ}:= 
\prod_p \bfZ_p$, $\hat{\Oo}_F:= \OF 
\otimes_{\bfZ} \hat{\bfZ}$, $K'= \GL_2(\hat{\Oo}_F)$ and 
$K=G(\hat{\bfZ})$. Note that $K$ is a maximal compact subgroup of 
$G(\AQf)$. For any rational prime $p$, we denote by $j_p$ the canonical 
embedding $G(\bfQ_p) \hookrightarrow G(\AQ).$

Let $M_n$ denote the additive group of $n \times n$ matrices.
Set $$S=\{ h \in \Res_{\OF/\bfZ} M_{2/\OF} \mid h^* = h\}, $$ $$T=\{h \in 
S(\bfQ) 
\mid \tr (S(\bfZ)h) \subset \bfZ \},$$ 
$$T_p=\{h \in
S(\bfQ_p)
\mid \tr (S(\bfZ_p)h) \subset \bfZ_p \},$$ and $$T_{\AQ} = \prod_p 
T_p\subset M_2(\AQf).$$ 
For a matrix $h=(h_p)_p \in T_{\AQ}$, 
and a prime $p$, set $$\epsilon_p(h) = \max \{n \in \bfZ \mid 
\frac{1}{p^n} h_p \in T_p \}$$ and $$\epsilon(h) = \prod_p 
p^{\epsilon_p(h)}.$$
Note that since $h \in T_{\AQ}$, we have $\epsilon_p(h) \geq 
0$ for every $p$.

\section{Some coset decompositions}

To shorten notation in this section we put $U_n= U(n,n)$. It is well-known 
(see e.g., \cite{Bump97}, Theorem 3.3.1) that for any 
finite subset $\mB$ of $\GL_n(\AFf)$ of cardinality $h_F$ with the 
property that the canonical homomorphism $c_F$ defined as the 
composite $\AF^{\times} \twoheadrightarrow 
\AF^{\times} / F^{\times} \bfC^{\times} \hat{\Oo}_F^{\times} \cong 
\Cl_F$ restricted to $\det \mB$ is a bijection, the following 
decomposition 
holds $$\GL_n(\AF) = \bigsqcup_{b \in \mB} \GL_n(F) \GL_n(\bfC) b 
\GL_n(\hat{\Oo}_F).$$
We will call any such $\mB$ a \textit{base}. We always assume that a base 
comes with a fixed ordering, so in particular if we consider a tuple 
$(f_b)_{b \in \mB}$ 
indexed by elements of $\mB$, and apply a permutation $\sigma$ to the 
elements 
$f_b$, we do not consider the tuples $(f_b)_{b \in \mB}$ and 
$(f_{\sigma^{-1}(b)})_{b \in 
\mB}$ to be the same.

Put $$H:= \{ a
\in \Res_{\OF/\bfZ} \bfG_{m/\OF} \mid a \ov{a} = 1\},$$ and let $\varphi:
\Res_{\OF/\bfZ} \bfG_{m/\OF}
\rightarrow H$ be defined by $\varphi(a)=a/\ov{a}$. Here $\bfG_m$ denotes 
the multiplicative group.

\begin{lemma} \label{Hlemma} For every rational prime $p$, we have $\det
U_n(\bfZ_p) =
\varphi(\Oo_{F,p}^{\times})$. \end{lemma}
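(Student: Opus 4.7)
The plan is to establish the two inclusions separately. The inclusion $\varphi(\Oo_{F,p}^{\times}) \subseteq \det U_n(\bfZ_p)$ is immediate from a direct construction: given $a \in \Oo_{F,p}^{\times}$, I take $q = \diag(a, 1, \dots, 1) \in \GL_n(\Oo_{F,p})$, so that $\hat{q} = \diag(\bar{a}^{-1}, 1, \dots, 1)$, and form $p_q \in U_n(\bfZ_p)$. Then $\det(p_q) = \det(q)\det(\hat{q}) = a \cdot \bar{a}^{-1} = \varphi(a)$.

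For the reverse inclusion, the first step is to observe that taking the determinant of the defining relation $gJg^* = J$ gives $\det(g)\overline{\det(g)} = 1$, so $\det(g) \in H(\bfZ_p)$ automatically. The claim thus reduces to showing $\det(g) \in \varphi(\Oo_{F,p}^{\times})$ for every $g \in U_n(\bfZ_p)$, and I would proceed case-by-case on the behavior of $p$ in $F$.

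In the split case, the identification $\Oo_{F,p} \cong \bfZ_p \times \bfZ_p$ with $\overline{(x,y)} = (y,x)$ gives by direct computation $H(\bfZ_p) = \{(u, u^{-1}) : u \in \bfZ_p^{\times}\} = \varphi(\Oo_{F,p}^{\times})$, so that sandwiching $\varphi(\Oo_{F,p}^{\times}) \subseteq \det U_n(\bfZ_p) \subseteq H(\bfZ_p)$ forces equality. In the inert case, $F_p/\bfQ_p$ is unramified of degree two; a Hilbert 90-type argument for units---lifting any $h \in H(\bfZ_p)$ to some $b \in F_p^{\times}$ with $h = b/\bar{b}$, then using that a uniformizer of $\bfQ_p$ remains a uniformizer of $F_p$ to absorb the valuation of $b$ into a unit factor---shows $\varphi(\Oo_{F,p}^{\times}) = H(\bfZ_p)$, so again equality follows.

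The ramified case is the main obstacle, since there $\varphi(\Oo_{F,p}^{\times})$ can be a proper subgroup of $H(\bfZ_p)$ (for instance, when $F_p = \bfQ_p(\pi)$ with $\pi$ a uniformizer, the element $\pi/\bar{\pi} \in H(\bfZ_p)$ need not arise as $u/\bar{u}$ for a unit $u$), so the argument cannot rely on $\det U_n(\bfZ_p)$ exhausting $H(\bfZ_p)$. Instead, one must argue directly that every $g \in U_n(\bfZ_p)$ admits a factorization $g = p_q \cdot s$ with $q \in \GL_n(\Oo_{F,p})$ suitably diagonal and $s \in U_n(\bfZ_p)$ of determinant $1$; then $\det(g) = \varphi(\det q) \in \varphi(\Oo_{F,p}^{\times})$. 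The delicate ingredient is producing such a decomposition, which requires a careful analysis of the integral structure of $U_n(\bfZ_p)$ at the ramified prime---the sort of computation that can be carried out by column-reducing $g$ via integral elementary unitary operations until the determinant is concentrated in a single diagonal block.
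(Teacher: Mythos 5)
Your construction for the inclusion $\varphi(\Oo_{F,p}^{\times}) \subseteq \det U_n(\bfZ_p)$ is exactly the one the paper uses (your $p_q$ with $q = \diag(a,1,\dots,1)$ is the paper's matrix $x_a$). For the reverse inclusion your route genuinely differs from the paper's: you observe $\det U_n(\bfZ_p) \subseteq H(\bfZ_p)$ and then show $H(\bfZ_p) = \varphi(\Oo_{F,p}^{\times})$ directly, which works and is more self-contained than the paper, which simply quotes \cite{Shimura97}, Lemma 5.11(4), for the containment $\det U_n(\bfZ_p) \subseteq \varphi(\Oo_{F,p}^{\times})$ at all primes. Your split-case identification $H(\bfZ_p) = \{(u,u^{-1})\}$ and your inert-case Hilbert 90 argument (absorbing the valuation of $b$ into a unit because $p$ stays a uniformizer) are both correct and complete.

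The genuine gap is the ramified case. You correctly diagnose that there $\varphi(\Oo_{F,p}^{\times})$ is a proper subgroup of $H(\bfZ_p)$, so the sandwich collapses and the whole content of the lemma becomes the strict inclusion $\det U_n(\bfZ_p) \subseteq \varphi(\Oo_{F,p}^{\times})$ --- but at that point you only state a strategy (factor $g = p_q s$ with $\det s = 1$ by integral column reduction) without executing it. That factorization claim is essentially equivalent to the assertion being proved, and it is exactly where the integral structure at the ramified prime must be used nontrivially: already for $n=1$ one has to check that the relations $a\bar b \in \bfZ_p$, $c\bar d \in \bfZ_p$, $a\bar d - b\bar c = 1$ force the relevant diagonal entry to be a unit, so that the determinant lands in $\varphi(\Oo_{F,p}^{\times})$ rather than in all of $H(\bfZ_p)$. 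Since the lemma is stated for every rational prime and Proposition \ref{decomp657} uses it at the ramified primes as well, this case cannot be waved off. The paper closes it by citing \cite{Shimura97}, Lemma 5.11(4), which gives $\det U_n(\bfZ_p) \subseteq \varphi(\Oo_{F,p}^{\times})$ uniformly in $p$; you should either invoke such a reference or actually carry out the reduction at ramified primes.
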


\begin{proof} We have $\det U_n(\bfZ_p)  \subset 
\varphi(\Oo_{F,p}^{\times})$ by
\cite{Shimura97}, Lemma 5.11(4). We
will show the other containment. Every element of 
$\varphi(\Oo_{F,p}^{\times})$ 
can be
written as $a/\ov{a}$
for some $a \in \Oo_{F,p}^{\times}$. For such an $a$,
set $x_a:=
\diag(a,1,1, \dots, \ov{a}^{-1},1,1,\dots)$, where $\ov{a}^{-1}$ is on the
$(n+1)$st position. We certainly have $x_a \in U_n(\bfZ_p)$ and 
$\det 
x_a =
a/\ov{a}$. \end{proof}

\begin{prop} \label{decomp657} Assume $2 \nmid h_F$. For any base 
$\mB$ the following
decomposition holds
\be \label{Undecomp} U_n(\AQ) = \bigsqcup_{b
\in \mB} U_n(\bfQ) U_n(\bfR) p_b U_n(\hat{\bfZ}).\ee \end{prop}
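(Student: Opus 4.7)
The plan is to exploit the determinant map $\det\colon U_n \twoheadrightarrow H$ together with strong approximation for the simply-connected cover $\SU_n$ (valid since $\SU(n,n)(\bfR)$ is non-compact). By local Hilbert~90 over every completion of $\bfQ$, $H(\AQ) \cong \AF^\times/\AQ^\times$ via $\varphi(\beta) = \beta/\bar\beta$, so every $g \in U_n(\AQ)$ satisfies $\det g = \varphi(\beta)$ for some $\beta \in \AF^\times$, unique modulo $\AQ^\times$. I would define $\Phi\colon U_n(\AQ) \to \Cl_F$ by $\Phi(g) := c_F(\beta)$; this is well defined because a place-by-place calculation (using $\bfQ^\times \subset F^\times$, $\hat{\bfZ}^\times \subset \hat{\Oo}_F^\times$, $\bfR^\times \subset \bfC^\times$) shows the image of $\AQ^\times$ in $\Cl_F$ is trivial. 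Moreover $\Phi$ is multiplicative.

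I would then verify that $\Phi$ descends to the double coset space $U_n(\bfQ)U_n(\bfR)\backslash U_n(\AQ)/U_n(\hat{\bfZ})$ by showing it vanishes on each of the three factors: for $\gamma \in U_n(\bfQ)$, Hilbert~90 over $\bfQ$ gives a preimage $\alpha \in F^\times$ of $\det \gamma$ with $[\alpha] = 1$ in $\Cl_F$; similarly over $\bfR$ for $U_n(\bfR)$; and for $u \in U_n(\hat{\bfZ})$, Lemma~\ref{Hlemma} gives $w \in \hat{\Oo}_F^\times$ with $\det u = \varphi(w)$, likewise trivial. A direct computation shows $\Phi(p_b) = c_F(\det b)$, so the defining property of a base yields a bijection $\mB \to \Cl_F$ and pairwise disjointness of the cosets $U_n(\bfQ)U_n(\bfR)p_b U_n(\hat{\bfZ})$.

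For exhaustion, given $g \in U_n(\AQ)$, let $b \in \mB$ be the unique element with $\Phi(g) = c_F(\det b)$, and write $\det g = \varphi(\delta c w \det b)$ with $\delta \in F^\times$, $c \in \bfC^\times$, $w \in \hat{\Oo}_F^\times$. Set $y_\delta := \diag(\delta, 1, \ldots, \bar\delta^{-1}, 1, \ldots) \in U_n(\bfQ)$ and analogously $y_c \in U_n(\bfR)$, $y_w \in U_n(\hat{\bfZ})$. Then $h := g \cdot (y_\delta y_c p_b y_w)^{-1}$ has trivial determinant, so lies in $\SU_n(\AQ)$. Applying strong approximation for $\SU_n$ with the open-compact subgroup $\SU_n(\bfR) \cdot y_\delta p_b \SU_n(\hat{\bfZ}) p_b^{-1} y_\delta^{-1}$, I would write $h = \gamma_0 r_0 \, y_\delta (p_b v p_b^{-1}) y_\delta^{-1}$ with $\gamma_0 \in \SU_n(\bfQ)$, $r_0 \in \SU_n(\bfR)$, $v \in \SU_n(\hat{\bfZ})$. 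Substituting back into $g = h \cdot y_\delta y_c p_b y_w$, using that $y_c$ (at infinity) commutes globally with the finite-place factors and the identity $(p_b v p_b^{-1}) p_b = p_b v$, one obtains $g_q = (\gamma_0 y_\delta) p_{b,q} (v_q y_{w,q})$ at each finite $q$ and $g_\infty = (\gamma_0 y_\delta)(y_\delta^{-1} r_0 y_\delta y_c)$ at infinity, giving the decomposition $g = \gamma r p_b u$ with $\gamma := \gamma_0 y_\delta \in U_n(\bfQ)$, $r := y_\delta^{-1} r_0 y_\delta y_c \in U_n(\bfR)$, $u := v y_w \in U_n(\hat{\bfZ})$.

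The main obstacle is the choice of compact subgroup in the strong-approximation step. Using the standard $\SU_n(\hat{\bfZ})$ would leave a factor $v$ sitting on the wrong side of $p_b$, requiring $p_b^{-1} v p_b \in U_n(\hat{\bfZ})$, which fails in general because $p_b$ does not normalize $U_n(\hat{\bfZ})$. The conjugation by $y_\delta p_b$ is designed precisely to absorb both this normalizer failure and the non-commutation of the $\bfQ$-rational element $y_\delta$ with $p_b$ into the compact-open factor, enabling the clean collection of terms into the desired product form.
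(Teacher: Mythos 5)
Your proof is correct, and at bottom it rests on the same mechanism as the paper's: classify the double cosets $U_n(\bfQ)U_n(\bfR)\backslash U_n(\AQ)/U_n(\hat{\bfZ})$ by the determinant valued in $H$, transfer to $\Cl_F$ via $\varphi$, and check that the double coset of $p_b$ is labelled by $c_F(\det b)$, which runs over $\Cl_F$ bijectively by the definition of a base. The difference is one of packaging: the paper delegates the two key facts --- that $\det$ induces a bijection onto $H(\AQ)/H(\bfQ)\det D$, and that $\varphi$ identifies the latter with $\AF^{\times}/\AQ^{\times}F^{\times}\bfC^{\times}\hat{\Oo}_F^{\times}$ --- to Lemma 8.14 of Shimura's book, whereas you prove exactly that content by hand, with strong approximation for the simply connected, archimedean-noncompact group $\SU(n,n)$ supplying the fibre statement; your conjugated compact-open subgroup and the collection of terms into $g=(\gamma_0 y_{\delta})\,(y_{\delta}^{-1}r_0y_{\delta}\,y_c)\,p_b\,(v y_w)$ check out (the only point you gloss over is that surjectivity of $\varphi:\AF^{\times}\to H(\AQ)$ needs the integral statement $H(\bfZ_p)\subset\varphi(\Oo_{F,p}^{\times})$ at almost all $p$, which holds at all unramified $p$, so this is harmless). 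What your version buys, besides self-containedness, is one clarifying observation: you never invoke $2\nmid h_F$. This is consistent with the paper, whose opening computation $c_F(\det p_b)=c_F(\det b)^2$ (where the parity hypothesis enters) applies $c_F$ directly to $\det p_b\in\AF^{\times}$, whereas the invariant that actually labels the double coset of $p_b$ under the composite bijection is $c_F(\varphi^{-1}(\det p_b))=c_F(\det b)$, no squaring involved. So your argument in fact establishes the decomposition for any class number; the oddness hypothesis is genuinely needed elsewhere (e.g., in Corollary \ref{scalarcor}), but not for this proposition.
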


\begin{proof} First note that $\{ c_F(\det p_b)\}_{b
\in \mB}= \Cl_F$ as long as $2 \nmid h_F$. [Indeed, $\det p_q = 
\det q/
\ov{\det q}$. Since $N_{F / \bfQ} \det q \in
\AQ^{\times} \subset F^{\times} \bfC^{\times} \hat{\Oo}_F^{\times}$, i.e.,
$\det q \hs \ov{\det q}$ represents a principal fractional ideal of $F$, 
we
have
$c_F(\ov{\det q}) = c_F(\det q^{-1})$ and hence $c_F(\det 
p_b) =
c_F(\det b)^2$. Since $2 \nmid h_F$, $\alpha 
\mapsto
\alpha^2$ is an
automorphism of $\Cl_F$. Hence $c_F(\det \mB) = \Cl_F$ implies $\{ 
c_F(\det 
p_b)
\}_{b \in \mB} = \Cl_F$.] In the notation of Lemma
8.14 of \cite{Shimura97}, we have $D= U_n(\bfR) 
U_n(\hat{\bfZ})$. By
part (3) of that lemma, the map $x \mapsto \det x$ defines a bijection
between $U_n(\bfQ)\setminus U_n(\AQ)/D$ and $H(\AQ)/H(\bfQ) \det D$. 
Set
$U_0$ (in the notation of part (4) of that lemma) to be 
$\bfC^{\times} \hat{\Oo}_F^{\times}$.
Then part (4) of that
lemma asserts that $\varphi$ gives an isomorphism of 
$\AF^{\times}/\AQ^{\times}
F^{\times} U_0$ with $H(\AQ)/H(\bfQ) \varphi(U_0)$. By Lemma \ref{Hlemma} 
we
have $\varphi(U_0)= \det D$. Thus the composite of $x
\mapsto \det x$ with $\varphi$ gives a bijection of $U_n(\bfQ)\setminus
U_n(\AQ)/D$ onto $\AF^{\times}/\AQ^{\times}
F^{\times} U_0$, which in turn can be identified with $\Cl_F$ since
$\AQ^{\times} =
\bfQ^{\times} \times \bfR_+ \times \hat{\bfZ}^{\times} \subset F^{\times}
U_0$.
\end{proof}

\begin{cor} \label{scalarcor} If $(h_F, 2n)=1$ a base $\mB$ can be 
chosen
so that for all $b \in \mB$ the matrices $b$ and $p_b$ are
scalar matrices and $b b^* = b^* b = I_n$. \end{cor}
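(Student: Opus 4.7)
The plan is to choose each element of the base to be a norm-one scalar matrix. Specifically, I will look for $\alpha \in \AFf^\times$ with $\alpha\ov{\alpha}=1$ and then take $b=\alpha I_n$. Such a $b$ is obviously scalar and satisfies $bb^*=b^*b=\alpha\ov{\alpha}I_n=I_n$. Moreover, $\hat{b}=(\ov{\alpha}^{-1})I_n=\alpha I_n$ (using $\alpha\ov{\alpha}=1$), so $p_b=\diag(b,\hat{b})=\alpha I_{2n}$ is also scalar. Thus the algebraic conditions of the corollary are automatic for this kind of element; the only remaining task is to realize every ideal class via such $b$'s.

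For this, note that $\det b=\alpha^n$, so I need to show that as $\alpha$ ranges over $\{\alpha\in\AFf^\times : \alpha\ov{\alpha}=1\}$, the class $c_F(\alpha^n)$ hits every element of $\Cl_F$. Since $(h_F,n)=1$, the $n$-th power map is a bijection of $\Cl_F$, so it is enough to show that $\{c_F(\alpha) : \alpha\ov{\alpha}=1\}=\Cl_F$. Given any $\beta\in\AFf^\times$, set $\alpha:=\beta/\ov{\beta}$; then $\alpha\ov{\alpha}=1$ by construction. Using the standard fact that for an imaginary quadratic field $F$, complex conjugation acts as inversion on $\Cl_F$ (because $I\ov{I}=(N_{F/\bfQ}I)$ is principal), we have $c_F(\ov{\beta})=c_F(\beta)^{-1}$, so $c_F(\alpha)=c_F(\beta)^2$. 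Letting $\beta$ range over representatives of $\Cl_F$, the resulting $\alpha$'s cover $\Cl_F^{2}$, which equals $\Cl_F$ since $(h_F,2)=1$.

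Combining these two observations, pick any set $\{\beta_1,\dots,\beta_{h_F}\}\subset\AFf^\times$ whose classes $c_F(\beta_i)$ form a complete set of representatives of $\Cl_F$; set $\alpha_i:=\beta_i/\ov{\beta_i}$ and $b_i:=\alpha_i I_n$. Then $\mB:=\{b_1,\dots,b_{h_F}\}$ consists of scalar matrices, $b_ib_i^*=I_n$, each $p_{b_i}$ is scalar, and $\{c_F(\det b_i)\}=\{c_F(\beta_i)^{2n}\}=\Cl_F$ by the bijectivity of the $2n$-th power map on $\Cl_F$. This $\mB$ is therefore a base in the sense of Proposition~\ref{decomp657} with all the desired properties.

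The only genuinely nontrivial input is the identity $c_F(\ov{\beta})=c_F(\beta)^{-1}$, i.e., the fact that the two coprimality hypotheses $2\nmid h_F$ and $n$ coprime to $h_F$ combine in exactly the right way to let the norm-one ideles cover $\Cl_F$ through the $n$-th power on determinants; this is the step where one must be careful to match the squaring coming from $\alpha=\beta/\ov{\beta}$ with the $n$ coming from $\det b=\alpha^n$, yielding the condition $(h_F,2n)=1$ in the hypothesis.
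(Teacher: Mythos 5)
Your proof is correct and follows essentially the same strategy as the paper: both take $b=\alpha I_n$ for a norm-one idele $\alpha$, and both rest on the identity $c_F(\ov{\beta})=c_F(\beta)^{-1}$ (conjugation inverts ideal classes because $I\ov{I}$ is principal) together with the bijectivity of the $2n$-th power map on $\Cl_F$ when $(h_F,2n)=1$. The only difference is that the paper produces its $\alpha$'s explicitly as ideles supported at split primes $\fp$ representing the class group (invoking Tchebotarev and the principality of inert primes), whereas you obtain them abstractly as $\beta/\ov{\beta}$ for arbitrary idele representatives $\beta$ --- a mild streamlining that avoids the density theorem.
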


\begin{proof} It follows from the Tchebotarev density theorem, that
elements of $\Cl_F$ can be represented by prime ideals. Since all 
the inert
ideals are principal, $\Cl_F$ can be represented by prime ideals
lying over split primes of the form $(p)=\fp \ov{\fp}$. Let $\Sigma$ be a
representing
set consisting of such ideals $\fp$. As $(2n,h_F)=1$, the set 
$\Sigma^{2n}$
consisting of elements of $\Sigma$
raised to the power $2n$ is also a representing set for 
$\Cl_F$. Moreover, as
$\fp \ov{\fp}$ is a principal ideal, $\ov{\fp} = \fp^{-1}$ as
elements of $\Cl_F$, hence $\Sigma':=\{ \fp^n \ov{\fp}^{-n}\}_{\fp\in 
\Sigma}$ 
also represents all the elements $\Cl_F$. Elements
of $\Sigma'$ can be written adelically as $\alpha_{\fp}^n$, 
with $\alpha_{\fp}=(1,1, \dots, 
1,
p, p^{-1}, 1,
\dots) \in \AFf$, where $p$ appears on the $\fp$-th place and $p^{-1}$
appears at the $\ov{\fp}$-th place. Set $b_{\fp}= \alpha_{\fp} I_n$.
Then we can take $\mB=\{ b_{\fp}\}_{\fp^n \ov{\fp}^{-n} \in \Sigma'}$ 
and we have $p_{b_{\fp}}=\alpha_{\fp}
I_{2n}$. It is also clear that $b b^*=b^* b =I_n$.  
\end{proof}

\section{Hermitian modular forms}

From now on let $k$ be a positive integer divisible by $\# 
\OF^{\times}$. Let $\bfi = \bsmat i \\ & i \esmat $ and set $$\mH:= 
\{Z \in M_2(\bfC) \mid -\bfi (Z-Z^*) >0\}.$$ The group 
$G(\bfR)$ acts on $\mH$ - an element $g=\bsmat A&B \\ C&D \esmat \in 
G(\bfR)$ 
(with $A, B, C, D \in M_2(\bfC)$) sends $Z 
\in \mH$ to $gZ:=(AZ+B)(CZ+D)^{-1}$. Set $j(g,Z):= \det(CZ+D)$. For a 
congruence 
subgroup $\G \subset 
G(\bfQ)$, define $\mM'_k(\Gamma)$ to be the $\bfC$-space consisting of 
functions $\phi: \mH \rightarrow \bfC$ satisfying 
$$(\phi|_k \g)(Z):= j(\g,Z)^{-k}\phi(\g Z) = \phi(Z)$$ for all $\g \in 
\G.$
Denote by 
$\mM_k(\G)$ the subspace of 
$\mM'_k(\Gamma)$ consisting of holomorphic functions. We call elements of 
$\mM_k(\Gamma)$ the \textit{$\Gamma$-hermitian modular forms (of weight 
$k$)}.

Let $\mM'_k$ denote the $\bfC$-space consisting of functions $f 
: G(\AQ) 
\rightarrow \bfC$ satisfying the following conditions: \begin{itemize}
\item $f (\g g) = f(g)$ for all $\g \in G(\bfQ)$, $g \in G(\AQ)$,
\item $f (g \kappa) = f(g)$ for all $\kappa \in K$, $g \in 
G(\AQ)$,
\item $f(hg) = j(h_{\iy}, g_{\iy} \bfi)f(g)$ for all $g = \g g_{\iy} 
\kappa 
\in G(\bfQ) G(\bfR) K$, $h=(h_{\iy},1) \in G(\bfR) G(\AQf)$. \end{itemize} 

Let $\mC \subset G(\AQf)$ be a finite subset such that $$G(\AQ) = 
\bigsqcup_{c 
\in \mC} G(\bfQ) G(\bfR) c K.$$ In particular if $h_F$ is odd we can 
take $\mC = \{p_b\}_{b \in \mB}$ for any base $\mB$. Let $f \in \mM'_k$. 
For $g\in 
G(\AQ)$, write $g = \g g_{\iy} c \kappa \in G(\bfQ) G(\bfR) c K$ for a 
unique $c\in \mC$ and set $Z:= g_{\iy} \bfi$. Set $f_c(Z) = 
j(g_{\iy}, \bfi)^{k}f(cg)$. The 
map 
$f 
\mapsto (f_c)_{c \in \mC}$ defines a $\bfC$-linear isomorphism 
$\Phi_{\mC}: \mM'_k \xrightarrow{\sim} \prod_{c \in \mC} \mM'_k(\G_c),$ 
where 
$\G_c:= G(\bfQ) \cap 
(G(\bfR) cK c^{-1})$ (cf. \cite{Shimura97}, p. 80). If $h_F$ is odd, $\mB$ 
is a base 
and $\mC = \{p_b\}_{b \in \mB}$, we write $\G_b$ instead of $\G_{p_b}$ and 
$f_b$ instead of $f_{p_b}$ for $b \in \mB$, and $\Phi_{\mB}$ instead of 
$\Phi_{\mC}$.

\begin{definition} A function $f \in \mM'_k$ whose image under the  
isomorphism $\Phi$ lands in $\prod_{c \in \mC} \mM_k(\G_c)$ will be called 
a 
\textit{hermitian modular form of weight $k$}. The 
space of 
hermitian modular forms of weight $k$ will be denoted by $\mM_k$. 
\end{definition} We clearly have \be \label{prod1} \mM_k \cong \prod_{c 
\in \mC} 
\mM_k(\G_c).\ee

For every $q \in G(\AQf)$, $f_q \in \mM_k(\G_q)$ possesses a Fourier 
expansion $$f_q(Z) = 
\sum_{h 
\in S(\bfQ)} 
c_q(h) e^{2 \pi i \tr (hZ)}.$$ Similarly, every $f \in \mM_k$ possesses a 
Fourier expansion, i.e., for every $q \in
\GL_2(\AF)$, and every $h \in S(\bfQ)$ there exists a complex number 
$c_f(h,q)$ such that one has $$f\left( \bmat I_2 & 
\sigma \\ & I_2 \emat \bmat q \\ & \hat{q} \emat
\right) = \sum_{h \in S(\bfQ)} c_f(h,q) e_{\AQ} (\tr h \sigma)$$ for every 
$\sigma \in S(\AQ)$. Here $e_{\AQ}$ is defined in the following way. 
Let $a = (a_v) \in \AQ$, where $v$ runs over all the places of $\bfQ$. If 
$v=\iy$, set $e_v(a_v) = e^{2 \pi i a_v}$. If $v =p$, set $e_v(a_v) = 
e^{-2 \pi i y}$, where $y$ is a rational number such that $a_v-y \in 
\bfZ_p$. Then we set 
$e_{\AQ}(a) = \prod_v e_v(a_v)$. 

\begin{definition} Let $\mB$ be a base. We will say that $q \in 
\GL_2(\AF)$ belongs to a class $b \in \mB$ if there exist $\g \in 
\GL_2(F)$, $q_{\iy} \in \GL_2(\bfC)$ and $\kappa\in K'$ such that $q = \g 
b q_{\iy} \kappa$. \end{definition}

\begin{rem} It is clear that the class of $q$ depends only on $q_{\bff}$.
\end{rem}

\begin{lemma} \label{sameclass} Suppose $r \in \GL_2(\AF)$ and $q \in
\GL_2(\AF)$ belong to the same class and $r_{\bff} = \g q_{\bff}
\kappa \in \GL_2(F) q_{\bff} \GL_2(\hat{\Oo}_F)$. Then
\be \label{fourierrel} c_f(h,r) = \left( \ov{\det r_{\iy}} / 
\ov{\det q_{\iy}}\right)^{k }e^{-2 \pi \tr (r_{\iy}^* h r_{\iy} -
q_{\iy}^*
\g^* h \g q_{\iy})} (\det \g^*)^{-k} c_f(\g^* h \g, q). \ee \end{lemma}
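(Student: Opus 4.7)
The plan is to decompose $p_r$ using the hypothesis, reduce to the case of equal finite parts via the invariance properties of $f$, and then extract the Fourier coefficient from the classical expansion of $f_b$.

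First, I would set $s := \g^{-1} r_\infty q_\infty^{-1}$ and verify the adelic identity
\[ p_r = \tilde\g \cdot \tilde s \cdot p_q \cdot \tilde\kappa \]
in $G(\AQ)$, where $\tilde\g := p_\g \in G(\bfQ)$ is the diagonal embedding, $\tilde\kappa \in K$ equals $p_\kappa$ at the finite places and $I$ at infinity, and $\tilde s \in G(\AQ)$ equals $p_s$ at infinity and $I$ at every finite place. This is immediate place-by-place from $r_\infty = \g s q_\infty$ and the hypothesis $r_\bff = \g q_\bff \kappa$. Using the commutation $n(\sigma) p_\g = p_\g n(\g^{-1}\sigma\hat\g)$, the left-$G(\bfQ)$- and right-$K$-invariance of $f$, and the identity $\tilde s \cdot p_q = p_{r^{(1)}}$ with $r^{(1)} \in \GL_2(\AF)$ defined by $r^{(1)}_\infty := \g^{-1} r_\infty$ and $r^{(1)}_\bff := q_\bff$, I would obtain
\[ f(n(\sigma) p_r) = f(n(\g^{-1}\sigma\hat\g) p_{r^{(1)}}). \]
Expanding both sides in Fourier series and substituting $h' = \g^* h \g$ — which, by the identity $\hat\g \g^* = I$, makes $\tr(h' \g^{-1}\sigma\hat\g) = \tr(h\sigma)$ — then yields the intermediate identity $c_f(h,r) = c_f(\g^* h \g, r^{(1)})$.

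Since $r^{(1)}$ and $q$ share the finite part $q_\bff$, they lie in the same class $b$ and admit a common class decomposition $q_\bff = \alpha b \beta$ with $\alpha \in \GL_2(F)$, $\beta \in \GL_2(\hat{\Oo}_F)$. Descending via $\Phi_{\mC}$ to the classical form $f_b \in \mM_k(\G_b)$ and evaluating $f(n((\sigma_\infty, 0)) p_q)$ by writing its argument in $G(\bfQ) G(\bfR) p_b K$, the classical Fourier expansion $f_b(Z) = \sum_h a_b(h) e^{2\pi i \tr(hZ)}$ would produce the explicit formula
\[ c_f(h,q) = \left(\overline{\det q_\infty}/\overline{\det \alpha}\right)^k e^{-2\pi \tr(q_\infty^* h q_\infty)} a_b(\alpha^* h \alpha), \]
together with an identical expression for $c_f(h, r^{(1)})$ (using the same $\alpha, \beta$). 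Taking the ratio, substituting back $h' = \g^* h \g$ and $r^{(1)}_\infty = \g^{-1} r_\infty$, and using the identities $(r^{(1)}_\infty)^* (\g^* h \g) r^{(1)}_\infty = r_\infty^* h r_\infty$ and $\overline{\det r^{(1)}_\infty} = \overline{\det r_\infty}(\det\g^*)^{-1}$, would combine with the intermediate identity to produce \eqref{fourierrel}. The hard part will be this last classical-to-adelic derivation of $c_f(h,q)$: one must carefully track the automorphy factor $(\overline{\det q_\infty}/\overline{\det\alpha})^k$ arising from the $j$-cocycle (after decomposing $n(\sigma)p_q$ in $G(\bfQ) G(\bfR) p_b K$) and the Gaussian factor $e^{-2\pi \tr(q_\infty^* h q_\infty)}$ arising from evaluating $f_b$ at the base point $i(\alpha^{-1} q_\infty)(\alpha^{-1} q_\infty)^* \in \mH$.
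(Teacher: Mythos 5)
Your argument is correct and reaches the same two ingredients as the paper's proof, but it assembles them in a genuinely more self-contained way. The paper simply quotes Shimura (\cite{Shimura97}, Prop.~18.3(4) and Lemma 10.8) for the adelic-to-classical formula $c_f(h,r)=\left(\ov{\det r_{\iy}}\right)^k e^{-2\pi\tr(r_{\iy}^*hr_{\iy})}c_{p_{r_{\tuf}}}(h)$ and for the classical slash relation $f_{p_{r_{\tuf}}}=f_{p_{q_{\tuf}}}|_k\bsmat \g^{-1} \\ & \g^* \esmat$, then multiplies the two identities together. You instead carry out the $\g$-transformation entirely adelically: the factorization $p_r=\tilde\g\,\tilde s\,p_q\tilde\kappa$, the commutation $n(\sigma)p_\g=p_\g\, n(\g^{-1}\sigma\hat\g)$, and the duality $\tr\bigl(\g^*h\g\cdot\g^{-1}\sigma\hat\g\bigr)=\tr(h\sigma)$ yield the clean intermediate identity $c_f(h,r)=c_f(\g^*h\g,r^{(1)})$ with $r^{(1)}=(\g^{-1}r_{\iy},q_{\bff})$, using only the left $G(\bfQ)$- and right $K$-invariance of $f$; the classical expansion is then invoked only once, for the pair $r^{(1)},q$ sharing a common finite part. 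This buys you two things: you never need the classical slash computation, and the step you flag as delicate (the exact $\alpha$-normalization in the explicit formula for $c_f(h,q)$) is harmless, since the $\ov{\det\alpha}$-power and the argument $a_b(\alpha^*h'\alpha)$ are literally identical for $r^{(1)}$ and $q$ and cancel in the comparison, leaving exactly $\left(\ov{\det r_{\iy}}/\ov{\det q_{\iy}}\right)^k(\det\g^*)^{-k}$ (via $\ov{\det(\g^{-1}r_{\iy})}=\ov{\det r_{\iy}}\,(\det\g^*)^{-1}$) and the Gaussian ratio, as \eqref{fourierrel} requires. Both routes are valid; yours trades the external citation for a short explicit computation and is arguably cleaner about where each invariance of $f$ is used.
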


\begin{proof} It follows from the proof of part (4) of
Proposition 18.3 of \cite{Shimura97}, that \be \label{f12} c_f(h,r) =
\left(
\ov{\det
r_{\iy}}\right)^k e^{-2 \pi \tr (r_{\iy}^* h r_{\iy})} 
c_{p_{r_{\tuf}}}(h),\ee
where
$$f_{p_{r_{\tuf}}}(Z) = \sum_{h \in S} c_{p_{r_{\tuf}}}(h) e^{2 \pi i \tr 
hZ}.$$ As is easy
to see (cf. for example the Proof of Lemma 10.8 in \cite{Shimura97}),
$f_{p_{r_{\tuf}}} = f_{p_{q_{\tuf}}} |_k \bsmat \g^{-1} \\ & \g^* \esmat$. 
Hence
\be \label{f13} c_{p_{r_{\tuf}}}(h) = (\det \g^*)^{-k}
c_{p_{q_{\tuf}}}(\g^* h
\g).\ee The Lemma follows from combining (\ref{f12}) with (\ref{f13}).
\end{proof}

\section{The Maass space} \label{The Maass space}

\begin{definition} \label{Maass form} Let $\mB$ be a base. We say that $f\in 
\mM_k$ is a $\mB$-\textit{Maass form} if there exist functions $c_{b, 
f}: 
\bfZ_{\geq 0} \rightarrow \bfC$, $b \in \mB$, such that for every $q \in 
\GL_2(\AF)$ and every $h \in S(\bfQ)$ the Fourier coefficient $c_f(h,q)$ 
satisfies
\begin{multline} \label{Maass condition} c_f(h,q) = 
\left(\ov{\det q_{\iy}}\right)^k 
e^{-2\pi\tr(q_{\iy}^* h q_{\iy})} \left( \det \g_{b, q}^*\right)^{-k} 
\times
\\
\times \sum_{\substack{d \in \bfZ_{+} \\ d \mid \epsilon(q_{\bff}^* h 
q_{\bff})}} 
d^{k-1} c_{b, f}\left( D_F d^{-2} \det h \hs \prod_{p} p^{\val_p(\det 
q_{\bff}^*q_{\bff})}\right),\end{multline} where $q_{\bff} = \g_{b, 
q} b 
\kappa_q \in \GL_2(F) b K'$ for a unique $b 
\in \mB$. \end{definition} 

\begin{rem} Note that by \cite{Shimura97}, Proposition 18.3(2), $c_f(h,q) 
\neq 0$ only if $(q^* h q)_p \in T_p$, so $\epsilon_p(q_{\bff}^* h 
q_{\bff})\geq 0$. Also, note that Definition \ref{Maass form} is 
independent of the decomposition $q_{\bff} = \g_{b,   
q} b
\kappa_q\in \GL_2(F) b K'$. Indeed, 
if $q_{\bff} = \g'_{b,
q} b
\kappa'_q \in \GL_2(F) b K'$ is 
another decomposition of $q_{\bff}$, then 
$$\det \g'_{b,
q} \det \g_{b,
q}^{-1} = \det (\kappa_q (\kappa'_q)^{-1})\in \hat{\Oo}_F^{\times} \cap 
F^{\times} =
\OF^{\times},$$ 
so $\det (\g'_{b,
q})^k =  \det \g_{b,
q}^k$. \end{rem}

\begin{definition} \label{Maass space} The $\bfC$-subspace of $\mM_k$ 
consisting of $\mB$-Maass forms 
will be called the $\mB$-\textit{Maass space}. \end{definition}

Let $T'_{\AQ}$ (resp. $T'_p$) be defined in the same way as $T_{\AQ}$ 
(resp. $T_p$) except we do not require that $h \in T'_{\AQ}$ (resp. 
$T'_p$) be hermitian. Define $\epsilon'$, $\epsilon'_p$ in the same way 
as $\epsilon$, $\epsilon_p$ except replace $T$ with $T'$. Then 
$\epsilon = \epsilon'|_{T}$. \begin{lemma} 
\label{epsilon1} If $h=(h_p)_p \in T'_{\AQ}$ and $\kappa=(\kappa_p)_p \in
K'$, then $$\epsilon'(h \kappa) = \epsilon'(\kappa h) = \epsilon'(h).$$
\end{lemma}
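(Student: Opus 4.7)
The statement reduces to a local assertion at each prime. Since $\epsilon'(\cdot) = \prod_p p^{\epsilon'_p(\cdot)}$, it suffices to show, for every rational prime $p$ and every $\kappa_p \in \GL_2(\Oo_{F,p})$, the equalities $\epsilon'_p(h_p\kappa_p) = \epsilon'_p(\kappa_p h_p) = \epsilon'_p(h_p)$. Using the defining formula $\epsilon'_p(g) = \max\{n \in \bfZ : p^{-n} g \in T'_p\}$ together with the fact that the scalar $p^{-n}$ commutes with matrix multiplication (so $p^{-n}(h_p \kappa_p) = (p^{-n}h_p)\kappa_p$, and similarly on the left), the claim reduces further to showing that right and left multiplication by any $\kappa_p \in \GL_2(\Oo_{F,p})$ each preserve $T'_p$. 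Since $\kappa_p^{-1}$ also lies in $\GL_2(\Oo_{F,p})$, it is enough to verify one inclusion in each direction.

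For right multiplication, given $h \in T'_p$ and $s \in S(\bfZ_p)$, the cyclicity of the matrix trace yields $\tr(s\cdot h\kappa_p) = \tr(\kappa_p s \cdot h)$, so the task is to verify that this quantity lies in $\bfZ_p$. The element $\kappa_p s$ lies in $M_2(\Oo_{F,p})$ but is generally not hermitian, so the main point is to show that the integrality condition built into $T'_p$ (stated against hermitian testers in $S(\bfZ_p)$) propagates to arbitrary testers in $M_2(\Oo_{F,p})$. For $p$ odd I would argue via the decomposition $\kappa_p s = \tfrac{1}{2}(\kappa_p s + s \kappa_p^*) + \tfrac{1}{2}(\kappa_p s - s \kappa_p^*)$: the first summand is hermitian (using $\kappa_p^* \in \GL_2(\Oo_{F,p})$) and lies in $S(\bfZ_p)$, so pairs integrally with $h$; the second is anti-hermitian and its pairing with the hermitian $h$ is Galois antisymmetric, hence meets $\bfZ_p \subset \bfQ_p$ only at zero. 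Left multiplication is treated symmetrically via $\tr(s \cdot \kappa_p h) = \tr(h s \cdot \kappa_p) = \tr(s\kappa_p \cdot h)$ and the same analysis applied to $s\kappa_p \in M_2(\Oo_{F,p})$.

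The principal obstacle is the prime $p = 2$, where the division by $2$ in the hermitian/anti-hermitian decomposition is not integral in $\bfZ_2$. For this case I would fall back on a direct verification using a set of generators of $\GL_2(\Oo_{F,2})$ — for instance the elementary unipotent matrices $I_2 + \alpha E_{ij}$ for $\alpha \in \Oo_{F,2}$, diagonal unit matrices, and the permutation matrix $J_1$ — checking by an explicit matrix computation that each generator preserves $T'_2$ under both left and right multiplication. Since any $\kappa_2 \in \GL_2(\Oo_{F,2})$ is a finite product of such generators, this completes the argument in the remaining case.
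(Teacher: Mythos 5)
Your skeleton --- localize at each $p$, show that one-sided multiplication by $\kappa_p\in\GL_2(\Oo_{F,p})$ carries $T'_p$ into itself, and get the reverse inclusion for free from $\kappa_p^{-1}\in\GL_2(\Oo_{F,p})$ --- is exactly the structure of the paper's proof, which simply asserts $\kappa_p T'_p\subseteq T'_p$ and deduces the rest. The gap is in your justification of that inclusion, at the anti-hermitian step. For $a$ anti-hermitian and $h$ hermitian one has $\overline{\tr (ah)}=-\tr (ah)$, so the pairing is Galois-antisymmetric, but it is by no means zero: your decomposition therefore only yields $\tr (s\,h\kappa_p)\in\bfZ_p+(\text{antisymmetric term})$, and concluding membership in $\bfZ_p$ would require that term to vanish, which it does not. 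A concrete instance: take $h=\bsmat 1&0\\0&0\esmat\in S(\bfZ_p)\subseteq T'_p$, $\kappa_p=\bsmat 1&\alpha\\0&1\esmat$ with $\alpha\in\Oo_{F,p}\setminus\bfZ_p$, and the tester $s=\bsmat 0&1\\1&0\esmat\in S(\bfZ_p)$; then $\tr\bigl(s\,h\kappa_p\bigr)=\alpha\notin\bfZ_p$. So the inclusion you are trying to prove is actually \emph{false} if the condition defining $T'_p$ is read completely literally, with trace values required to lie in $\bfZ_p$ against hermitian testers. (You also treat $h\in T'_p$ as hermitian, which is precisely the hypothesis dropped in passing from $T_p$ to $T'_p$; and the $p=2$ fallback via generators inherits the same defect, since the obstruction above already occurs for a unipotent generator.)

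The resolution is that the integrality condition in the definition of $T'_p$ has to be understood with values in $\Oo_{F,p}$ rather than $\bfZ_p$ once $h$ is no longer hermitian (the $\bfZ_p$-valued reading in fact forces $T'_p=T_p\subset S(\bfQ_p)$, making the lemma vacuously false). With that reading, and $p$ unramified, $T'_p$ is just the self-dual lattice $M_2(\Oo_{F,p})$ up to normalization, $T_p=T'_p\cap S(\bfQ_p)$, and stability under left and right multiplication by $\GL_2(\Oo_{F,p})$ is immediate --- no hermitian/anti-hermitian decomposition and no special case $p=2$ are needed. That one-line observation is the entire content of the paper's proof; the extra work you do at the level of hermitian testers is exactly where your argument breaks down.
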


\begin{proof} If $h_p = p^n h'$, $h' \in T'_p$, then $\kappa_p h_p = p^n 
\kappa_p
h' \in p^n T'_p$. On the other hand, if $\kappa_p h_p \in p^n T'_p$, then 
$h_p =
\kappa_p^{-1} (\kappa_p h_p) \in p^n T'_p$ by the above argument. So, $h_p 
\in 
p^n
T'_p$ if and only if $\kappa_p h_p \in p^n T'_p$, so $\epsilon_p(h \kappa) 
=
\epsilon_p(h)$. The other equality is proved in the same way. \end{proof}

\begin{cor} \label{epsilon} If $q \in \GL_2(\AF)$ and $r \in
\GL_2(\AF)$ are in the same class, and $r_{\bff} =\g q_{\bff} \kappa
\in \GL_2(F)
q_{\bff}K'$, then $\epsilon(r_{\bff}^* h
r_{\bff}) = \epsilon( q_{\bff}^* \g^* h
\g q_{\bff})$. \end{cor}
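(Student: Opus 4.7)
The plan is to compute $r_{\bff}^* h r_{\bff}$ directly and then strip off the $K'$-factors on either side using Lemma \ref{epsilon1}. Writing $r_{\bff} = \g q_{\bff} \kappa$ with $\g \in \GL_2(F)$ and $\kappa \in K'$, a direct calculation gives
\[
r_{\bff}^* h r_{\bff} = \kappa^* \bigl( q_{\bff}^* \g^* h \g q_{\bff} \bigr) \kappa.
\]

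Next I would observe that $K' = \GL_2(\hat{\Oo}_F)$ is stable under the involution $x \mapsto x^* = \ov{x}^t$, since complex conjugation preserves $\hat{\Oo}_F$ and transposition preserves $\GL_2$; hence $\kappa^* \in K'$. Also, $h \in S(\bfQ)$ is hermitian, so both $r_{\bff}^* h r_{\bff}$ and $q_{\bff}^* \g^* h \g q_{\bff}$ are hermitian elements of $M_2(\AFf)$, and in particular lie in $T'_{\AQ}$ whenever the quantities $\epsilon$ are meant to be defined (this is the situation of interest in the application to Definition \ref{Maass form}).

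I would then apply Lemma \ref{epsilon1} twice to the matrix $m := q_{\bff}^* \g^* h \g q_{\bff}$, first peeling off $\kappa$ on the right and then $\kappa^*$ on the left:
\[
\epsilon'\bigl(\kappa^* m \kappa\bigr) = \epsilon'\bigl(\kappa^* m\bigr) = \epsilon'(m).
\]
Since $\epsilon$ agrees with $\epsilon'$ on hermitian elements (we are told $\epsilon = \epsilon'|_T$), this gives exactly the claimed equality $\epsilon(r_{\bff}^* h r_{\bff}) = \epsilon(q_{\bff}^* \g^* h \g q_{\bff})$.

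The argument is essentially an identity of matrices followed by a direct invocation of the preceding lemma, so I do not anticipate a real obstacle; the only point that requires a moment of care is verifying that $\kappa^* \in K'$, which follows from the definition of $K'$ and the involution.
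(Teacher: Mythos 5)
Your proposal is correct and is exactly the intended argument: the paper states this as an immediate corollary of Lemma \ref{epsilon1} with no written proof, and the derivation it has in mind is precisely your identity $r_{\bff}^* h r_{\bff} = \kappa^* \bigl(q_{\bff}^* \g^* h \g q_{\bff}\bigr)\kappa$ followed by two applications of that lemma, using $\kappa, \kappa^* \in K'$. Your additional care about $\kappa^* \in K'$ and about $\epsilon$ versus $\epsilon'$ on hermitian elements is appropriate and does not change the substance.
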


\begin{prop} \label{to check} Choose a base $\mB$ and let $f \in \mM_k$. 
If 
there exist functions $c^*_{b,f}: \bfZ_{\geq 0} \rightarrow \bfC$, $b \in 
\mB$, such 
that for every $b \in \mB$ and every $h \in S(\bfQ)$, the Fourier coefficient 
$c_f(h,b)$ satisfies 
condition (\ref{Maass condition}) with $c_{b,f}^*$ in place of $c_{b,f}$, 
then $f$ is a $\mB$-Maass form and one has $c_{b,f}=c^*_{b,f}$ for every 
$b \in \mB$. \end{prop}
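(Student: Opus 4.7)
The plan is to leverage Lemma \ref{sameclass} to reduce the computation of $c_f(h,q)$ for an arbitrary $q \in \GL_2(\AF)$ to the computation of $c_f(h',b)$ for $b \in \mB$ the class representative of $q$ and a suitably modified $h'$; then invoke the hypothesis that the Maass condition holds at each $b \in \mB$.

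Fix $q \in \GL_2(\AF)$ and $h \in S(\bfQ)$. Let $b \in \mB$ be the unique element such that $q_{\bff} = \g b \kappa_q$ with $\g:=\g_{b,q} \in \GL_2(F)$ and $\kappa_q \in K'$. View $b \in \GL_2(\AF_\bff)$ as the adele $(I_2, b) \in \GL_2(\AF)$, so that $b_{\iy} = I_2$ and $q$, $b$ lie in the same class. Applying Lemma \ref{sameclass} with $r:=q$ and (in the lemma's notation) $q:= b$ yields
$$c_f(h,q) = (\ov{\det q_{\iy}})^k\, e^{-2\pi \tr(q_{\iy}^* h q_{\iy}\,-\,\g^* h \g)}\,(\det \g^*)^{-k}\, c_f(\g^* h \g, b).$$
By hypothesis the Fourier coefficient at $b$ satisfies (\ref{Maass condition}) with $c^*_{b,f}$; applied to $\g^* h \g$ (and using the trivial decomposition $b_{\bff} = I_2 \cdot b \cdot I_2$, so $\g_{b,b}=I_2$), this gives
$$c_f(\g^* h \g, b) = e^{-2\pi \tr(\g^* h \g)}\sum_{d \mid \epsilon(b^* \g^* h \g b)} d^{k-1} c^*_{b,f}\!\left(D_F d^{-2} \det(\g^* h \g)\textstyle\prod_p p^{\val_p(\det b^* b)}\right).$$
Substituting into the previous display cancels the factor $e^{2\pi \tr(\g^* h \g)}$ in the exponent.

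It remains to verify the two bookkeeping identities
\begin{enumerate}
\item[(a)] $\epsilon(b_{\bff}^* \g^* h \g b_{\bff}) = \epsilon(q_{\bff}^* h q_{\bff})$, and
\item[(b)] $\det(\g^* h \g) \prod_p p^{\val_p(\det b_{\bff}^* b_{\bff})} = \det h \prod_p p^{\val_p(\det q_{\bff}^* q_{\bff})}$.
\end{enumerate}
Identity (a) is precisely Corollary \ref{epsilon} applied to $r=q$ and the class representative $b$. For (b), expand $\det(\g^* h \g) = N_{F/\bfQ}(\det \g) \det h$ (using $\g \in \GL_2(F)$ and $h^*=h$), and $\det(q_{\bff}^* q_{\bff}) = N_{F/\bfQ}(\det \g)\cdot N_{F/\bfQ}(\det b) \cdot N_{F/\bfQ}(\det \kappa_q)$ from $q_{\bff}=\g b \kappa_q$; the last factor lies in $\hat{\bfZ}^{\times}$ and contributes trivially to every $\val_p$, while $N_{F/\bfQ}(\det \g)$ is a positive rational and therefore equals $\prod_p p^{\val_p(N_{F/\bfQ}(\det \g))}$. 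Assembling these yields (b) and thus the Maass condition for $c_f(h,q)$ with $c^*_{b,f}$ in place of $c_{b,f}$.

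This shows $f$ is a $\mB$-Maass form with associated functions $c^*_{b,f}$, and in particular $c_{b,f} = c^*_{b,f}$. No single step is a serious obstacle; the only subtlety is recognizing that the hypothesis is meant to be combined with Lemma \ref{sameclass} by taking the class representative as the ``base point'' $q=b$ in that lemma, after which everything reduces to checking that the two normalizing factors ($\epsilon$ and $\prod_p p^{\val_p(\cdot)}$) are invariant under the adjustment $h \mapsto \g^* h \g$, $b \mapsto q$.
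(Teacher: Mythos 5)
Your proof is correct and is essentially the same as the paper's: both reduce $c_f(h,q)$ to $c_f(\g^* h \g, b)$ via Lemma \ref{sameclass}, apply the hypothesis at the class representative $b$, and then match the arguments of $c^*_{b,f}$ using Corollary \ref{epsilon} together with the fact that $\det(\g^*\g)=N_{F/\bfQ}(\det\g)\in\bfQ_+$ equals $\prod_p p^{\val_p(\det\g^*\g)}$. Nothing further to add.
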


\begin{proof} Fix $\mB$ and $f \in \mM_k$. Suppose there exist 
$c_{b,f}^*$ 
such that (\ref{Maass condition}) is satisfied for all pairs $(h,b)$. Let 
$q = \g b x \kappa = (\g x, \g b \kappa)\in \GL_2(\bfC) \times 
\GL_2(\AFf)$, where $\g \in \GL_2(F)$, $x \in \GL_2(\bfC)$ and $\kappa 
\in K'$. Then by Lemma \ref{sameclass}, $$c_f(h,q) = \left( \ov{\det 
q_{\iy}}\right)^k e^{-2 \pi \tr (q_{\iy}^* h q_{\iy} -
\g^* h \g )} (\det \g^*)^{-k} c_f(\g^* h \g, b).$$ Since condition 
(\ref{Maass 
condition}) is satisfied for $(h,b)$, we know that
$$c_f(h,b)= e^{-2 \pi \tr h} 
\sum_{\substack{d \in \bfZ_{+} \\ d \mid \epsilon(b^* hb)}}
d^{k-1} c^*_{b, f}\left( D_F d^{-2} \det h \hs \prod_{p} p^{\val_p(\det   
b^*b)}\right).$$ Thus $$c_f(\g^*  h \g,b) = e^{-2 \pi \tr(\g^* h \g)} 
\sum_{\substack{d \in \bfZ_{+} \\ d \mid \epsilon(b^*\g^* h\g b)}}
d^{k-1} c^*_{b, f}\left( D_F d^{-2} \det (\g^* h\g) \hs \prod_{p} 
p^{\val_p(\det
b^*b)}\right).$$ So, \begin{multline} c_f(h,q) = \left(\ov{\det 
q_{\iy}}\right)^k
e^{-2\pi\tr(q_{\iy}^* h q_{\iy})} \left( \det \g^*\right)^{-k}
\times\\
\times \sum_{\substack{d \in \bfZ_{+} \\ d \mid \epsilon(b^* \g^* h
\g b)}}
d^{k-1} c^*_{b, f}\left( D_F d^{-2} \det h \det(\g^* \g) \hs \prod_{p} 
p^{\val_p(\det
b^* b)}\right).\end{multline}
The claim now follows since $\epsilon(b^* \g^* h   
\g b) = \epsilon(q_{\bff}^* h q_{\bff})$ by Corollary \ref{epsilon} and $\det 
(\g^* \g) \in \bfQ_+$, so $\det (\g^* \g) = \prod_p p^{\val_p(\det \g^* 
\g)}$. 
\end{proof}

\begin{prop} \label{independence} If $\mB$ and $\mB'$ are two bases, then 
the $\mB$-Maass space and the $\mB'$-Maass space coincide, i.e., the 
notion of a Maass form is independent of the choice of the base. 
\end{prop}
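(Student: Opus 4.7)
The plan is to show that every $\mB$-Maass form is a $\mB'$-Maass form; the reverse inclusion then follows by symmetry. Fix such an $f$ and, for each $b' \in \mB'$, let $b = b(b') \in \mB$ denote the unique element of $\mB$ in the same class as $b'$. Writing $b' = \g b \kappa' \in \GL_2(F) b K'$ with $\g \in \GL_2(F)$ and $\kappa' \in K'$, I define
$$c_{b', f}(m) := (\det \g^*)^{-k}\, c_{b, f}(m), \qquad m \in \bfZ_{\geq 0}.$$
The first step is to check that this does not depend on the chosen decomposition. Exactly as in the remark following Definition \ref{Maass form}, any two choices $\g, \g'$ satisfy $\det \g'/\det \g \in \hat{\Oo}_F^\times \cap F^\times = \OF^\times$, and since $k$ is divisible by $\#\OF^\times$, the quantity $(\det \g^*)^k$ is well-defined.

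By Proposition \ref{to check}, it suffices to verify the Maass condition (\ref{Maass condition}) for each pair $(h, b')$ with $h \in S(\bfQ)$. Applying Lemma \ref{sameclass} with $r = b'$ and $q = b$ (viewed in $\GL_2(\AF)$ with trivial infinite component) yields
$$c_f(h, b') = e^{-2\pi \tr(h - \g^* h \g)} (\det \g^*)^{-k}\, c_f(\g^* h \g, b).$$
Since $f$ is a $\mB$-Maass form and $b_{\bff} = 1\cdot b\cdot 1$ is the trivial decomposition, the Maass condition applied at $(\g^* h \g, b)$ gives
$$c_f(\g^* h \g, b) = e^{-2\pi \tr(\g^* h \g)} \sum_{\substack{d \in \bfZ_+ \\ d\,\mid\,\epsilon(b^* \g^* h \g b)}} d^{k-1}\, c_{b, f}\!\left( D_F d^{-2} \det(\g^* h \g) \prod_p p^{\val_p(\det b^* b)}\right).$$

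Two identifications then complete the verification. First, Corollary \ref{epsilon} gives $\epsilon(b^* \g^* h \g b) = \epsilon((b')^* h b')$. Second, from $(b')^* b' = (\kappa')^* b^* \g^* \g b \kappa'$ together with $\det \kappa', \det((\kappa')^*) \in \hat{\Oo}_F^\times$ one obtains
$$\prod_p p^{\val_p(\det (b')^* b')} = N_{F/\bfQ}(\det \g) \prod_p p^{\val_p(\det b^* b)},$$
while $\det(\g^* h \g) = N_{F/\bfQ}(\det \g) \det h$. Combining these with the definition of $c_{b', f}$ shows that $c_f(h, b')$ equals the right-hand side of (\ref{Maass condition}) at $q = b'$ (with $\g_{b', b'} = 1$, $\kappa_{b'} = 1$, $q_\iy = 1$), so $f$ is a $\mB'$-Maass form. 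The main work lies in the bookkeeping of the normalizing factors $\det \g^*$, $N_{F/\bfQ}(\det \g)$, and the valuation products; no genuinely new ingredients beyond Lemma \ref{sameclass}, Corollary \ref{epsilon}, and Proposition \ref{to check} are required.
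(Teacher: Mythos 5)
Your proof is correct, and it rests on the same key idea as the paper's: the new coefficient functions $c_{b',f}$ are fixed scalar multiples of the old ones, with the scalar being $(\det\g^*)^{-k}$ for $b'=\g b\kappa'$ (well-defined because $k$ is divisible by $\#\OF^\times$); this is exactly the paper's $\ov{\alpha_{b,b'}}^{-k}$, since $\det(b'b^{-1})=\det\g\det\kappa'$ forces $\alpha_{b,b'}$ and $\det\g$ to differ by a unit of $\OF$. Where you diverge is in the verification: the paper works with a general $q$, writes $q_{\bff}=\g_{b,q}b\kappa_{\mB}=\g_{b',q}b'\kappa_{\mB'}$, and observes that the two candidate Maass expressions are literally identical except for the normalizing factors $\det(\g_{b,q}^*)^{-k}$ versus $\det(\g_{b',q}^*)^{-k}$, whose ratio it computes via the ideal-theoretic decomposition $\det(b'b^{-1})=\alpha_{b,b'}\kappa_{b,b'}$. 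You instead invoke Proposition \ref{to check} to reduce to the pairs $(h,b')$ and then verify the condition there directly via Lemma \ref{sameclass} and Corollary \ref{epsilon}, which requires redoing the bookkeeping of $\det(\g^*h\g)=N_{F/\bfQ}(\det\g)\det h$ and of $\prod_p p^{\val_p(\det(b')^*b')}$ but avoids the general-$q$ comparison. Both routes are sound and of comparable length; yours has the small structural advantage of reusing the already-established reduction to base representatives, while the paper's makes transparent that the sum and its argument $s$ are decomposition-independent from the outset.
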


\begin{proof} Let $\mB$ and $\mB'$ be 
two bases. Write $q_{\bff} =  
\g_{b,q} b
\kappa_{\mB}= \g_{b',q} b' \kappa_{\mB'}$ with $b \in \mB$, 
$b'\in \mB'$, $\g_{b,q}, 
\g_{b',q} \in \GL_2(F)$ and $\kappa_{\mB},  \kappa_{\mB'} \in K'$. 
Suppose $f$ is a $\mB$-Maass form, i.e., 
there 
exist functions $c_{b,f}$ for $b \in \mB$, such that for every $q$ and 
$h$,  
$$c_f(h,q) = t \hs \det(\g_{b,q}^*)^{-k} 
\sum_{\substack{d \in \bfZ_{+} \\ d \mid \epsilon(q_{\bff}^* h
q_{\bff})}}
d^{k-1} c_{b, f}(s),$$ where $t=\left(\ov{\det q_{\iy}}\right)^k
e^{-2\pi\tr(q_{\iy}^* h q_{\iy})} $ and 
$s= D_F d^{-2} \det h \hs 
\prod_{p} p^{\val_p(\det
q_{\bff}^*q_{\bff})}$. Our goal is to show that there exist 
functions $c_{b',f}$ for $b' \in \mB'$, 
such that for every $q$ and $h$, \be \label{want1} c_f(h,q) = t \hs 
\det(\g_{b',q}^*)^{-k}
\sum_{\substack{d \in \bfZ_{+} \\ d \mid \epsilon(q_{\bff}^* h
q_{\bff})}}
d^{k-1} c_{b', f}(s).\ee We have \be \label{dets1} \det \g_{b,q} = 
\det 
\g_{b',q} 
\det (b' b^{-1}) \det (\kappa_{\mB}^{-1} \kappa_{\mB'}).\ee
Since $\det(b'b^{-1})$ corresponds to a principal fractional ideal, say 
$(\alpha_{b,b'})$, under the map $((\alpha_{b,b'})_{\fp}) \mapsto 
\prod_{\fp} 
\fp^{\val_{\fp}((\alpha_{b,b'})_{\fp})}$, using \cite{Bump97}, Theorem 
3.3.1, we 
can write $\det(b' b^{-1}) = \alpha_{b,b'} \kappa_{b,b'}\in 
\AFf^{\times}$ with 
$\kappa_{b,b'} \in \hat{\Oo}_F^{\times}$. Then it follows from (\ref{dets1}) 
that 
$$\beta:= \kappa_{b,b'} \det(\kappa_{\mB}^{-1} \kappa_{\mB'}) \in 
\hat{\Oo}_F^{\times} \cap F^{\times} = \OF^{\times}.$$ 
Hence $\beta^k=1$. Thus $\left(\det \g_{b,q}^*\right)^{-k} = \left(\det 
\g_{b',q}^*\right)^{-k} \ov{\alpha_{b,b'}}^{-k}$. Note that 
$\alpha_{b,b'}^{-k}$ is 
well defined and only depends on $b$ and $b'$ (i.e., it is independent 
of $q$ and $h$). Set $c_{b', f}(n) = 
\ov{\alpha_{b,b'}}^{-k}c_{b, f}(n)$. Then it is clear that $c_{b', f}$ 
satisfies (\ref{want1}).
\end{proof} 

\begin{definition} \label{maass2} From now on we will refer to $\mB$-Maass 
forms simply 
as 
\textit{Maass forms}. Similarly we will talk about the \textit{Maass space} 
instead of $\mB$-Maass spaces. This is justified by Proposition 
\ref{independence}. The Maass space will be denoted by 
$\mM^{\tuM}_k$. \end{definition}

We now recall the definition of Maass space introduced in \cite{Krieg91}. 
We will refer to it as the $G(\bfZ)$-Maass space. Consider the space 
$\mM_k(G(\bfZ))$ of $G(\bfZ)$-hermitian modular forms of weight $k$. We 
say that $\phi
(Z)= \sum_{h \in T} c_{\phi}(h) e^{2 \pi i \tr (hZ)} \in \mM_k(G(\bfZ))$ 
is a 
\textit{$G(\bfZ)$-Maass form} if there exists a function $\alpha_{\phi}: 
\bfZ_{\geq 0} 
\rightarrow \bfC$ such that for every $h \in T$, one has \be 
\label{kriegcond} c_{\phi}(h) = \sum_{\substack{ d \in \bfZ_+\\ d \mid 
\epsilon(h)}} d^{k-1} \alpha_{\phi}(D_F d^{-2} \det h).\ee The subspace of 
$\mM_k(G(\bfZ))$ consisting of $G(\bfZ)$-Maass forms will be denoted by 
$\mM_k^{\tuM}(G(\bfZ))$.

\begin{prop} \label{oddclass} If $2 \nmid h_F$, then the Maass space 
$\mM_k^{\tuM}$ is isomorphic 
(as a 
$\bfC$-linear space) to $\# \mB$ copies of the $G(\bfZ)$-Maass space 
$\mM^{\tuM}_k(G(\bfZ))$.
\end{prop}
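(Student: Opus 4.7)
Since $h_F$ is odd we have $(h_F, 4) = 1$, so Corollary \ref{scalarcor} (with $n=2$) provides a base $\mB = \{b_1, \dots, b_{h_F}\}$ in which every $b \in \mB$ is a scalar matrix $\alpha_b I_2$ with $bb^* = b^*b = I_2$. The plan is to show that for this choice of base the isomorphism $\Phi_{\mB}$ of (\ref{prod1}) restricts to an isomorphism $\mM_k^{\tuM} \xrightarrow{\sim} \prod_{b \in \mB} \mM_k^{\tuM}(G(\bfZ))$, after which the proposition follows by counting $\# \mB = h_F$.

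The first step is to identify $\G_b$ with $G(\bfZ)$. Since $b = \alpha_b I_2$ and $b b^* = I_2$ give $\alpha_b \ov{\alpha_b} = 1$, we have $\hat b = \alpha_b I_2$ and hence $p_b = \alpha_b I_4$ is a scalar matrix in $G(\AQf)$. Because scalars centralize everything, $p_b K p_b^{-1} = K$, and therefore $\G_b = G(\bfQ) \cap G(\bfR) p_b K p_b^{-1} = G(\bfQ) \cap G(\bfR) K = G(\bfZ)$. Thus $\Phi_{\mB}$ becomes a $\bfC$-linear isomorphism $\mM_k \cong \prod_{b \in \mB} \mM_k(G(\bfZ))$.

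Next I would translate the Maass condition into the language of $f_b = f_{p_b}$. Using (\ref{f12}) with $r_{\bff} = b$ and $r_\iy = I_2$, one gets $c_f(h, b) = e^{-2\pi \tr h} c_{p_b}(h)$, where $c_{p_b}(h)$ is the $h$-th Fourier coefficient of $f_b$. On the other hand, since $b$ is scalar with $b b^* = I_2$, one checks $b^* h b = h$, so $\epsilon(b^* h b) = \epsilon(h)$ and $\val_p(\det b^* b) = 0$ for every $p$. Therefore the Maass condition (\ref{Maass condition}) for the pair $(h, b)$ reduces exactly to
\[
c_{p_b}(h) \;=\; \sum_{\substack{d \in \bfZ_+ \\ d \mid \epsilon(h)}} d^{k-1} c_{b,f}\bigl(D_F d^{-2} \det h\bigr),
\]
which is precisely Krieg's $G(\bfZ)$-Maass condition (\ref{kriegcond}) for $f_b$ with $\alpha_{f_b} = c_{b,f}$. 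This gives the forward direction: if $f \in \mM_k^{\tuM}$, then every $f_b \in \mM_k^{\tuM}(G(\bfZ))$.

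For the converse, suppose $(f_b)_{b \in \mB} \in \prod_{b \in \mB} \mM_k^{\tuM}(G(\bfZ))$ with associated functions $\alpha_{f_b}$, and let $f = \Phi_{\mB}^{-1}((f_b)_b)$. Defining $c_{b,f}^* := \alpha_{f_b}$ and reversing the computation above shows that condition (\ref{Maass condition}) is satisfied whenever $q = b$ for some $b \in \mB$. Proposition \ref{to check} then upgrades this to the full Maass condition for every $q \in \GL_2(\AF)$, so $f \in \mM_k^{\tuM}$. The two directions together give $\mM_k^{\tuM} \cong \prod_{b \in \mB} \mM_k^{\tuM}(G(\bfZ)) \cong \mM_k^{\tuM}(G(\bfZ))^{h_F}$. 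The only subtle point — and the one I would be most careful about — is making sure the scalar choice of base genuinely trivializes all the correction factors in (\ref{Maass condition}) ($\det \g_{b,q}^*$, $\val_p(\det q_{\bff}^* q_{\bff})$, and the change of variable in $\epsilon$); everything else is bookkeeping now that Propositions \ref{to check} and \ref{independence} are available.
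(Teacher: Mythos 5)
Your proof is correct and follows essentially the same route as the paper: choose the scalar base from Corollary \ref{scalarcor}, use $\Phi_{\mB}$ and formula (\ref{f12}) together with $b^*b=I_2$ to match condition (\ref{Maass condition}) at $q=b$ with Krieg's condition (\ref{kriegcond}), and invoke Proposition \ref{to check} for the converse. The extra details you supply (checking $\G_b=G(\bfZ)$ and that each correction factor trivializes) are just explicit versions of what the paper leaves implicit.
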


\begin{proof} Since the Maass space is independent of the choice of 
a base $\mB$ by Proposition \ref{independence}, we may choose $\mB$ 
as in Corollary \ref{scalarcor}. The map $\Phi_{\mB}: \mM_k 
\rightarrow \prod_{b \in 
\mB} \mM_k(G(\bfZ))$ is an isomorphism. Let $f \in \mM_k^{\tuM}$ 
and set $(f_b)_{b \in \mB}= \Phi_{\mB}(f)$. Set $\a_{f_b}:= c_{b,f}$. 
Then using
(\ref{f12}), and the fact that the matrices $b$
commute with
$h$ and
$b^*b=1$, we see that condition (\ref{Maass 
condition}) for $c_f(h,b)$ translates 
into condition (\ref{kriegcond}) for $c_{f_b}(h)$.
Hence $\Phi_{\mB}(\mM^{\tuM}_k) \subset 
\prod_{b \in \mB} \mM^{\tuM}_k(G(\bfZ))$. On the other hand if $(f_b)_{b 
\in \mB} \in \prod_{b \in \mB} \mM^{\tuM}_k(G(\bfZ))$, set $c_{b,f}:= 
\alpha_{f_b}$. Then conditions 
(\ref{kriegcond}) for $c_{f_b}(h)$ translate into conditions 
(\ref{Maass condition}) for $c_f(b,h)$. By Proposition 
\ref{to check} this implies 
that $f$ is a Maass form.
\end{proof}

\section{Invariance under Hecke action}

It was proved in \cite{Krieg91} that the $G(\bfZ)$-Maass space is 
invariant under the action of a certain Hecke operator $T_p$ associated 
with a prime $p$ 
which is inert in $F$. On the other hand Gritsenko in \cite{Gritsenko90} 
proved the invariance of the $G(\bfZ)$-Maass space under all the Hecke 
operators when the class number of $F$ is equal to 1. In this section we 
show that if the class number of $F$ is odd, then the Maass space 
$\mM_k^{\tuM}$ is 
in 
fact invariant under all the local Hecke algebras (for primes $p \nmid 
D_F$).

\subsection{The Hecke algebra}

From now on assume that $h_F$ is odd. Let $p$ be a rational prime and 
write $K_p$ for $G(\bfZ_p)$. Let $\mH_p$ 
be the $\bfC$-algebra 
generated by 
double cosets $K_p g K_p$, $g \in G(\bfQ_p)$ with the usual law of 
multiplication (cf. \cite{Shimura97}, section 11). 
If $K_pgK_p \in \mH_p$, there exists a finite set $A_g \subset 
G(\bfQ_p)$ such that $K_p g K_p = \bigsqcup_{\alpha \in A_g} K_p \alpha$. 

For $f \in \mM_k$, $g \in G(\bfQ_p)$, $h \in G(\AQ)$, set 
$$([K_p g K_p]f)(h) = \sum_{\a 
\in A_g } f(h j_p(\a)^{-1}).$$ It is clear that $[K_p g K_p]f \in \mM_k$.

\begin{thm} \label{thmmain} Let $p \nmid D_F$ be a rational prime. The 
Maass 
space is invariant under the action of $\mH_p$, i.e., if $f \in 
\mM^{\tuM}_k$, and $g \in G(\bfQ_p)$, then $[K_pgK_p]f \in \mM^{\tuM}_k$. 
\end{thm}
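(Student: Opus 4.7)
The plan is to reduce the problem to the classical setting via Proposition \ref{oddclass}. Fix a base $\mB$ of scalar matrices as in Corollary \ref{scalarcor}, and write $(f_b)_{b \in \mB} = \Phi_\mB(f)$, so that $f \in \mM^{\tuM}_k$ is equivalent to each $f_b$ being a classical $G(\bfZ)$-Maass form with $\alpha_{f_b} = c_{b,f}$. Fix $g \in G(\bfQ_p)$ and a decomposition $K_p g K_p = \bigsqcup_\a K_p \a$.

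First I would analyse how $[K_p g K_p]$ acts through $\Phi_\mB$. For each $b \in \mB$ and each $\a \in A_g$, the plan is to rewrite the product $p_b \, j_p(\a^{-1})$ in the form $\g_{b,\a} h_\iy \, p_{b'(b,\a)} \, \kappa_{b,\a}$ with $\g_{b,\a} \in G(\bfQ)$, $h_\iy \in G(\bfR)$, $b'(b,\a) \in \mB$ and $\kappa_{b,\a} \in K$. The assignment $(b,\a) \mapsto b'(b,\a)$ records how the Hecke operator permutes the $\mB$-classes, and summing over the relevant pairs yields an expression for each component $([K_p g K_p] f)_{b'}$ as a $\bfC$-linear combination of values $f_b|_k \delta$ for explicit $\delta \in G(\bfQ)$. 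Thus $[K_p g K_p]$ descends under $\Phi_\mB$ to a $\#\mB \times \#\mB$ matrix of classical double-coset operators on $\mM_k(G(\bfZ))$.

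Next I would unwind this identity on the level of Fourier coefficients. The coefficient $c_{([K_p g K_p] f)_{b'}}(h)$ becomes a finite sum of scaled coefficients $c_{f_b}(\cdot)$; substituting the Krieg--Maass relation (\ref{kriegcond}) for each $f_b$ and reorganising the resulting double sum over divisors and over $\a$, verifying $[K_pgK_p]f \in \mM^{\tuM}_k$ reduces to a combinatorial divisor identity, namely that the result again has the shape (\ref{kriegcond}) for some new function of $D_F d^{-2} \det h$. It suffices to check the identity on a set of generators of $\mH_p$. For inert $p$ the assignment $b'(b,\a) \equiv b$ is trivial (since $c_F(\det \a)$ is trivial because the inert prime is principal in $\Cl_F$), so the matrix is diagonal and Krieg's theorem \cite{Krieg91} combined with a direct Fourier-expansion check for the remaining generators suffices. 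For split $p$ the isomorphism $G(\bfQ_p) \cong \GL_4(\bfQ_p)$ identifies $\mH_p$ with the spherical Hecke algebra of $\GL_4(\bfQ_p)$; here $b'(b,\a)$ corresponds to multiplication by a power of $\fp$ in $\Cl_F$, and the required combinatorial identity becomes the one Gritsenko verified in the class-number-one case \cite{Gritsenko90}.

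The main obstacle is the split case: the coset representatives $A_g$ for a generator of $\mH_p$ (e.g.\ $[K_p \diag(p,\ldots,p,1,\ldots,1) K_p]$) must be chosen carefully enough that both the permutation $(b,\a) \mapsto b'(b,\a)$ and the resulting Fourier-coefficient identity become tractable, so that the combinatorics reduces cleanly to the classical Saito--Kurokawa--Gritsenko divisor identity. The tracking of $\epsilon(q_{\bff}^* h q_{\bff})$ through the twist by $\a$, together with the $\det \g_{b,\a}^*$ factor that must be absorbed into the new function $c_{b', [K_pgK_p]f}$, is where most of the bookkeeping resides.
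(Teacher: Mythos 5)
Your plan is essentially the paper's proof: reduce to the generators $T_{\fp}$, $U_{\fp}$, $\Delta_{\fp}$ of $\mH_p$ (Lemma \ref{generation}), write down explicit left-coset decompositions, track how each coset representative moves the class $b$ within $\Cl_F$ (the permutation $\sigma_{\fp,n}$), relate $\epsilon(\a^* h \a)$ to $\epsilon(h)$ via the split-prime analogue of Krieg's diagonalization lemma (Proposition \ref{diagonal}), and verify the resulting divisor-sum identity — with the inert case reducing to Krieg/Gritsenko exactly as you say. The only (cosmetic) difference is that the paper keeps the bookkeeping adelic, using Proposition \ref{to check} to verify the Maass condition only at $q=b$, rather than first transporting everything to a matrix of classical double-coset operators via $\Phi_{\mB}$.
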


\begin{proof} We will only present the proof in the case when $p$ 
splits in $F/\bfQ$. For such a prime $p$ the invariance of 
$\mM_k^{\tuM}$ under the action of $\mH_p$ follows from Lemma
\ref{generation} and Propositions
\ref{invariance}, \ref{invariance2} and \ref{invariance3} below. If $p$ is 
inert one 
can proceed along the same lines, 
however, it is the case when $p$ splits that is essentially new.  
Indeed, if $p$ is inert, the elements of
$\mH_p$ respect the decomposition
(\ref{prod1}), hence the statement of the theorem reduces to an assertion 
about the action of $\mH_p$ on $\mM_k(G(\bfZ))$. Then 
the method used in \cite{Gritsenko90} can be adapted to prove the theorem. 
See 
also Theorem 7 in \cite{Krieg91} which proves the invariance of the 
$G(\bfZ)$-Maass space for a certain family of Hecke operators in $\mH_p$.
\end{proof}

Let $p$ be a prime which splits in $F$. Write $(p)=\fp \ov{\fp}$. 
Recall that $G(\bfQ_p) \cong \GL_4(\bfQ_p)$, and
an element 
$A$
of $G(\bfQ_p)$ can be written as $A=(A_1, A_2) \in \GL_4(\bfQ_p) \times 
\GL_4(\bfQ_p)$ with $A_2 = -J (A_1^t)^{-1} J.$ Note that if 
$a,b,c,d \in M_2(\bfC)$ then $$-J \bmat a & 
b \\ c &d \emat J = \bmat d & -c \\ -b 
&a \emat.$$ Set \begin{itemize} \item $T_{\fp} := K_p \iota_p((\diag( 
p^{-1}, 1,1,1), \diag(1,1,p,1) ))K_p$
\item $U_{\fp}:= K_p \iota_p((\diag (p^{-1}, p^{-1}, 1,1) , 
\diag(1,1,p,p)))K_p,$
\item $\Delta_{\fp} := K_p \iota_p((pI_4,p^{-1}I_4)) K_p.$ \end{itemize}

\begin{lemma} \label{generation} The $\bfC$-algebra $\mH_p$ is generated 
by the operators 
$T_{\fp}$, $T_{\ov{\fp}}$, $U_{\fp}$, $\Delta_{\fp}$ and their 
inverses. \end{lemma}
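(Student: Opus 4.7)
The plan is to use the first-projection isomorphism $G(\bfQ_p)\cong\GL_4(\bfQ_p)$ available at a split prime (noted in the excerpt just before $T_{\fp}$ is defined) to identify $\mH_p$ with the spherical Hecke algebra of $\GL_4$, and then to recognise $T_{\fp},T_{\ov{\fp}},U_{\fp},\Delta_{\fp}$ as a standard generating set there. Under $(g_1,g_2)\mapsto g_1$, the subgroup $K_p=G(\bfZ_p)$ is carried onto $\GL_4(\bfZ_p)$, so $\mH_p\cong\mathcal{H}(\GL_4(\bfQ_p),\GL_4(\bfZ_p))$. By the Satake isomorphism (equivalently, the fundamental theorem of symmetric polynomials applied to the Cartan decomposition), this algebra is the polynomial $\bfC$-algebra $\bfC[t_1,t_2,t_3,t_4,t_4^{-1}]$, where $t_k$ is the Hecke operator whose Cartan representative is $\diag(p,\dots,p,1,\dots,1)$ with $k$ copies of $p$. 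Thus it suffices to realise each $t_k$ inside the subalgebra generated by the four given operators and $\Delta_{\fp}^{-1}$.

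Next I would identify each given operator with a $\GL_4(\bfZ_p)$-double coset. The relation $A_2=-J(A_1^t)^{-1}J$ forces the second factor of each $\iota_p$-pair once the first is specified; a direct matrix calculation confirms that the stated second factors in the definitions are indeed correct. Consequently, under first projection,
$$T_{\fp}\ \leftrightarrow\ K_p\diag(p^{-1},1,1,1)K_p,\qquad T_{\ov{\fp}}\ \leftrightarrow\ K_p\diag(1,1,p,1)K_p,$$
$$U_{\fp}\ \leftrightarrow\ K_p\diag(p^{-1},p^{-1},1,1)K_p,\qquad \Delta_{\fp}\ \leftrightarrow\ K_p(pI_4)K_p.$$
Since $\diag(1,1,p,1)$ and $\diag(p,1,1,1)$ are conjugate by a permutation matrix lying in $\GL_4(\bfZ_p)$, the image of $T_{\ov{\fp}}$ is exactly $t_1$; the image of $\Delta_{\fp}$ is visibly $t_4$.

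Finally, because $pI_4$ is central in $\GL_4(\bfQ_p)$, convolution with $\Delta_{\fp}$ preserves double cosets: one has $\Delta_{\fp}\cdot[K_p\alpha K_p]=[K_p(p\alpha)K_p]$ for every $\alpha$. Applying this gives
$$\Delta_{\fp}\cdot U_{\fp}=K_p\diag(1,1,p,p)K_p=t_2,\qquad \Delta_{\fp}\cdot T_{\fp}=K_p\diag(1,p,p,p)K_p=t_3,$$
and together with $t_1=T_{\ov{\fp}}$, $t_4=\Delta_{\fp}$, $t_4^{-1}=\Delta_{\fp}^{-1}$, we conclude that $t_1,t_2,t_3,t_4,t_4^{-1}$ all lie in the subalgebra generated by $T_{\fp},T_{\ov{\fp}},U_{\fp},\Delta_{\fp}$ and $\Delta_{\fp}^{-1}$. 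The only step requiring real care is the bookkeeping in identifying the first-projection images of the four double cosets, and even this reduces to the observation that the second factor in each pair is determined by the first via $A_2=-J(A_1^t)^{-1}J$; the remaining ingredients (the $\GL_n$ spherical-Hecke structure and the centrality of $pI_4$) are standard.
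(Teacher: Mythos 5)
Your proof is correct and follows the same route the paper takes: the paper's proof of Lemma \ref{generation} is the single sentence ``This follows from the theory of Hecke algebras on $\GL_4(\bfQ_p)$,'' and your argument is exactly the expansion of that remark (first-projection identification $\mH_p\cong\mathcal{H}(\GL_4(\bfQ_p),\GL_4(\bfZ_p))$, the standard generators $t_1,\dots,t_4,t_4^{-1}$ from the Satake/Cartan description, and the translation by the central coset $\Delta_{\fp}$ to recover $t_2$ and $t_3$ from $U_{\fp}$ and $T_{\fp}$). All the matrix bookkeeping you carry out, including the check that the second factors are determined by $A_2=-J(A_1^t)^{-1}J$, is consistent with the definitions in the paper.
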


\begin{proof} This follows from the theory of Hecke algebras on 
$GL_4(\bfQ_p)$. \end{proof}

\subsection{Diagonalizing hermitian matrices mod $p^n$}

We begin by proving an analogue of Proposition 7 of \cite{Krieg91} for a 
split prime $p$.

\begin{prop} \label{diagonal} Let $n$ be a positive integer, $p$ a 
prime number split in $F$ and 
write $(p) = \fp \ov{\fp}$. Let $h \in T$, $h \neq 0$. Then there exist 
$a$, $d \in \bfZ_+$ with $p \nmid a$ and $u \in \SL_2(\OF)$ such that 
$$u^* h u \equiv \epsilon(h) \bmat a \\ &d \emat \pmod{p^nT}.$$ 
\end{prop}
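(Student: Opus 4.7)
The plan is to adapt Proposition 7 of \cite{Krieg91}, which treats inert primes, to the split case. The essential new feature at a split prime $p$ is that $\OF\otimes\bfZ_p\cong\bfZ_p\times\bfZ_p$, so a hermitian matrix decomposes as a pair $(H,H^t)$, and conjugation by $u\in\SL_2(\OF)$ becomes an independent two-sided $\SL_2(\bfZ_p)\times\SL_2(\bfZ_p)$-action on $H$, rather than the unitary action encountered in the inert case.

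First I would reduce to the primitive case. Set $h':=h/\epsilon(h)$; one checks $h'\in T$ with $\epsilon_q(h')=0$ for every prime $q$. If one establishes the conclusion for $h'$ (with $\epsilon(h')=1$, i.e., $u^*h'u\equiv\diag(a,d)\pmod{p^nT}$), then multiplying by $\epsilon(h)$ yields the statement for $h$. So henceforth assume $h$ is primitive.

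The main step is local at $p$. Using $\iota_p$, identify $h_p$ with $(H,H^t)$, $H\in M_2(\bfZ_p)$ primitive mod $p$ (some entry lies in $\bfZ_p^\times$), and identify the image of $u\in\SL_2(\OF)$ in $\SL_2(\OF\otimes\bfZ_p)$ with a pair $(U_1,U_2)\in\SL_2(\bfZ_p)\times\SL_2(\bfZ_p)$. A direct computation yields
\be
(u^*hu)_p=\bigl(U_2^tHU_1,\,(U_2^tHU_1)^t\bigr),
\ee
so it suffices to produce $U_1,U_2\in\SL_2(\bfZ_p)$ with $U_2^tHU_1\equiv\diag(\alpha,\beta)\pmod{p^n}$ and $\alpha\in\bfZ_p^\times$. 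This is a Smith-normal-form-style reduction: first use $\SL_2(\bfZ_p)$-rotations on the left and right to move a unit entry of $H$ to the $(1,1)$ position, then use elementary transvections to clear the $(1,2)$ and $(2,1)$ entries with this unit pivot, arriving at $\diag(\alpha,\beta)$ with $\alpha\in\bfZ_p^\times$ and $\beta=\det H/\alpha\in\bfZ_p$.

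Finally, I would lift $(U_1,U_2)$ to a global $u\in\SL_2(\OF)$ using surjectivity of $\SL_2(\OF)\twoheadrightarrow\SL_2(\OF/p^n\OF)\cong\SL_2(\bfZ/p^n)\times\SL_2(\bfZ/p^n)$ (immediate from the fact that $\SL_2$ is generated by elementary matrices, each of which lifts trivially), then choose positive integers $a\equiv\alpha$, $d\equiv\beta\pmod{p^n}$ with $p\nmid a$ (automatic from $\alpha\in\bfZ_p^\times$). The congruence $u^*hu\equiv\diag(a,d)\pmod{p^nT}$ holds at $p$ by construction and is automatic at any $q\neq p$, since $p\in\bfZ_q^\times$ gives $p^nT_q=T_q$. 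The main technical point is carrying out the local $\SL_2\times\SL_2$ (not $\GL_2\times\GL_2$) Smith reduction while retaining a unit pivot mod $p^n$, but this is straightforward in the local ring $\bfZ_p$ once a unit entry has been brought into the $(1,1)$ slot.
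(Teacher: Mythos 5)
Your proof is correct and takes essentially the same route as the paper's: reduce to $\epsilon(h)=1$, use the splitting $\OF/p^n\cong \bfZ/p^n\times\bfZ/p^n$ to turn $u^*hu$ into the two-sided action $U_2^tHU_1$, perform an $\SL_2\times\SL_2$ Smith-type diagonalization with a unit pivot, and lift via surjectivity of $\SL_2(\OF)\to\SL_2(\OF/p^n)$. The paper merely leaves the diagonalization step and the passage from $T$ to integral hermitian matrices implicit, which you have (correctly) spelled out.
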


In fact it is enough to prove the following lemma.

\begin{lemma} \label{diagonal2} Proposition \ref{diagonal} holds if $T$ is 
replaced by the subgroup of hermitian matrices inside $M_2(\OF)$. 
\end{lemma}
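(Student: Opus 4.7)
The plan is to exploit the splitting of $p$ in $F$ to reduce hermitian diagonalization modulo $p^n$ to the Smith normal form theorem over $\bfZ_p$, and then to globalize the resulting local $\SL_2$-elements via strong approximation.

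Since $p=\fp\ov{\fp}$, the isomorphism $\iota_p\colon\bfQ_p\times\bfQ_p\xrightarrow{\sim}F_p$ restricts to $\bfZ_p\times\bfZ_p\xrightarrow{\sim}\OF\otimes\bfZ_p$, under which $a\mapsto\ov{a}$ corresponds to swapping the two factors. The induced identification $M_2(\OF\otimes\bfZ_p)\cong M_2(\bfZ_p)\times M_2(\bfZ_p)$ carries a hermitian matrix $h$ to a pair $(A,A^t)$ for a unique $A\in M_2(\bfZ_p)$, and carries $u\in\SL_2(\OF\otimes\bfZ_p)$ to $(u_1,u_2)\in\SL_2(\bfZ_p)\times\SL_2(\bfZ_p)$. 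A direct computation using $u^*=\ov{u}^t$ then identifies $u^*hu$ with $(u_2^tAu_1,\,(u_2^tAu_1)^t)$, so producing $u$ with $u^*hu$ diagonal modulo $p^n$ is equivalent to producing $v,w\in\SL_2(\bfZ_p)$ with $vAw$ diagonal modulo $p^n$.

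Applying Smith normal form to the nonzero $A\in M_2(\bfZ_p)$ yields $V,W\in\GL_2(\bfZ_p)$ with $VAW=\diag(p^\alpha,p^\beta)$, $\alpha\le\beta$, and $\alpha=\epsilon_p(h)$. After replacing $V$ by $\diag((\det V)^{-1},1)V$ and $W$ by $W\diag((\det W)^{-1},1)$, both factors lie in $\SL_2(\bfZ_p)$ and we get $V'AW'=\diag(\varepsilon p^\alpha,p^\beta)$ with $\varepsilon=(\det V\det W)^{-1}\in\bfZ_p^\times$. Choose a positive integer $a$ coprime to $p$ with $a\equiv\varepsilon\pmod{p^n}$ and set $d:=p^{\beta-\alpha}\in\bfZ_+$; then $V'AW'\equiv p^\alpha\diag(a,d)\pmod{p^n M_2(\bfZ_p)}$.

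To globalize, invoke strong approximation for $\SL_2$: since $\SL_2$ is simply connected and $\SL_2(\bfC)$ is noncompact, the reduction map $\SL_2(\OF)\twoheadrightarrow\SL_2(\OF/p^n\OF)\cong\SL_2(\bfZ/p^n\bfZ)\times\SL_2(\bfZ/p^n\bfZ)$ is surjective. Lift $(W',(V')^t)\bmod p^n$ to some $u\in\SL_2(\OF)$; by construction $u^*hu\equiv\epsilon(h)\diag(a,d)\pmod{p^n M_2(\OF)}$. Since both sides are integral hermitian matrices, this yields the required congruence modulo $p^n$ times the lattice of hermitian matrices inside $M_2(\OF)$. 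The main obstacle is the bookkeeping identifying the hermitian conjugation $u\mapsto u^*$ with the transpose-swap action on the two $\GL_2(\bfZ_p)$-factors under the splitting $\OF\otimes\bfZ_p\cong\bfZ_p\times\bfZ_p$; once that identification is set up correctly, Smith normal form and strong approximation finish the argument.
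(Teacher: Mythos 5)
Your proof is correct and follows essentially the same route as the paper's: split $p$, identify hermitian matrices over $\OF\otimes\bfZ_p$ with a single copy of $M_2$ via $h\mapsto(A,A^t)$ so that $u^*hu$ becomes $u_2^tAu_1$, diagonalize by $\SL_2$-elements (the paper asserts this directly over $\bfZ/p^n\bfZ$ where you invoke Smith normal form over $\bfZ_p$), and lift through the surjection $\SL_2(\OF)\twoheadrightarrow\SL_2(\OF/p^n\OF)$, which the paper cites from Serre. The only point deserving a word is the case $\det A=0$, where the second elementary divisor is $0$ rather than $p^\beta$; there one simply takes $d$ to be any positive integer divisible by $p^n$ and the congruence modulo $p^n$ still holds.
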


\begin{proof} Without loss of generality we may assume that 
$\epsilon(h)=1$. Let $(M, M^t)$ be the image of $h$ under the composite 
$$M_2(\OF) \twoheadrightarrow M_2(\OF/p^n) \xrightarrow{\sim} 
M_2(\OF/\fp^n) \oplus M_2(\OF/\ov{\fp}^n) \cong M_2(\bfZ/p^n) \oplus 
M_2(\bfZ/p^n).$$ Since the canonical map $\SL_2(\OF) \rightarrow 
\SL_2(\OF/p^n) \cong \SL_2(\bfZ/p^n) \oplus \SL_2(\bfZ/p^n )$ is surjective 
(\cite{Serre70}, p. 490), it is enough to find $A_1, A_2 \in 
\SL_2(\bfZ/ p^n \bfZ)$ such that \be \label{conj1} A_2^t M A_1 = \bsmat 
\alpha \\ 
& \delta \esmat\ee
with $\alpha \neq 0$ mod $p$. 
The existence of such $A_1$ and $A_2$ is clear.
\end{proof}

\subsection{Invariance under $T_{\fp}$}

\begin{lemma} \label{decomp1} We have the following decomposition 
\be \begin{split} T_{\fp}  = & \bigsqcup_{a,b,c \in \bfZ/p \bfZ} K_p 
\left( 
\bsmat p^{-1} & 
ap^{-1} & bp^{-1} & cp^{-1} \\ & 1\\ && 1\\ &&&1 \esmat, \bsmat 1&&b\\ &1 
& c \\ && p \\ && -a & 1 \esmat \right)  \sqcup \\
& \bigsqcup_{d,e \in \bfZ/p \bfZ}  K_p \left( \bsmat 1 \\ & p^{-1} & 
dp^{-1} 
& ep^{-1} \\ && 1 \\ &&& 1 \esmat , \bsmat 1 &&& d \\ & 1 && e \\ && 1 \\ 
&&& p \esmat \right)\sqcup \\
& \bigsqcup_{f \in \bfZ/p \bfZ} K_p \left( \bsmat 1\\ & 1 \\ && p^{-1} & 
p^{-1} f \\ &&& 1 \esmat , \bsmat p \\ -f & 1 \\ && 1 \\ &&& 1 \esmat 
\right) \sqcup \\
& K_p \left( \bsmat 1 \\ & 1 \\ && 1 \\ &&& p^{-1} \esmat, \bsmat 1 \\ & p 
\\ && 1 \\ &&& 1 \esmat \right). 
\end{split} \ee \end{lemma}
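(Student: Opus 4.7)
The plan is to reduce the decomposition to a standard Hecke decomposition in $\GL_4(\bfQ_p)$ and then transport it back through the pair-parametrization of $G(\bfQ_p)$. Recall that for a split prime $p$, the map $(A_1,A_2)\mapsto A_1$ gives an isomorphism $G(\bfQ_p)\cong \GL_4(\bfQ_p)$ carrying $K_p$ to $K_4:=\GL_4(\bfZ_p)$, with $A_2 = -J(A_1^t)^{-1}J$ determined by $A_1$. Under this isomorphism the double coset $T_\fp$ becomes the classical double coset $K_4\,\diag(p^{-1},1,1,1)\,K_4$ in $\GL_4(\bfQ_p)$.

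First I would establish the coset decomposition of $K_4\,\diag(p^{-1},1,1,1)\,K_4$. Writing $\diag(p^{-1},1,1,1)=p^{-1}\cdot\diag(1,p,p,p)$, this double coset parametrizes (up to the scalar $p^{-1}$) the index-$p$ sublattices of $\bfZ_p^4$, of which there are $(p^4-1)/(p-1)=p^3+p^2+p+1$. Using Hermite normal form (or equivalently the Bruhat decomposition relative to the Borel of upper-triangular matrices), a standard set of representatives is obtained as $\diag(1,\ldots,p^{-1},\ldots,1)\cdot U$, where $p^{-1}$ sits in position $i\in\{1,2,3,4\}$ and $U$ is an upper-unipotent matrix whose only nontrivial entries lie in row $i$, to the right of the pivot, and range independently over $\bfZ/p\bfZ$. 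This produces exactly $p^3+p^2+p+1$ left cosets, corresponding term-by-term to the four families appearing in the lemma (of cardinalities $p^3,p^2,p,1$), and the count alone then forces disjointness.

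Next I would translate each first-factor representative $A_1$ into the pair $(A_1,A_2)$ via $A_2=-J(A_1^t)^{-1}J$. Since $A_1=\diag(1,\ldots,p^{-1},\ldots,1)\cdot U$ with $U$ upper unipotent, one has $(A_1^t)^{-1}=\diag(1,\ldots,p,\ldots,1)\cdot (U^t)^{-1}$, so the computation reduces to conjugating a diagonal matrix and a lower-unipotent matrix by $J$. A direct block calculation using $J=\bsmat&-I_2\\ I_2&\esmat$, $J^{-1}=-J$, and the identity $-J\bsmat a&b\\c&d\esmat J=\bsmat d&-c\\-b&a\esmat$ shows that the diagonal factor $\diag(1,\ldots,p^{-1},\ldots,1)$ with $p^{-1}$ in position $i$ gets sent to $\diag(1,\ldots,p,\ldots,1)$ with $p$ in position $5-i$, and that the unipotent factor is transported to a lower-unipotent matrix whose entries are the original parameters $a,b,c,d,e,f$ (possibly with signs) in the predicted positions. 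Matching these output pairs against the four families displayed in the lemma verifies each case.

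The only real work is the matrix bookkeeping in this last step: one has to carry out the conjugation $-J(\,\cdot\,)J$ for each of the four pivot positions and confirm that the signs and placements agree with the lemma (for instance, for the first family one checks $-J(A_1^t)^{-1}J=\bsmat 1&&b\\&1&c\\&&p\\&&-a&1\esmat$). This is routine but slightly delicate, and is the main place a careless error could occur; once it is done, the decomposition follows because the cosets are exhausted by the count $p^3+p^2+p+1$ of index-$p$ sublattices of $\bfZ_p^4$, so no separate disjointness check is needed beyond noting that distinct first factors give distinct left cosets in $\GL_4(\bfQ_p)$.
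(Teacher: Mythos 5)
Your approach is exactly the paper's: the proof there consists of the single sentence that the decomposition follows from the analogous one for $\GL_4(\bfQ_p)$, and you have simply filled in the standard count of $p^3+p^2+p+1$ cosets via Hermite normal form and the transport to the second factor via $A_2=-J(A_1^t)^{-1}J$. One small slip in your write-up: conjugation by $J$ swaps the two $2\times 2$ diagonal blocks, so $p^{-1}$ in position $i$ of $A_1$ produces $p$ in position $i+2 \pmod 4$ of $A_2$, not position $5-i$ --- your own worked example for the first family (which correctly places $p$ in position $3$) already contradicts the $5-i$ rule, and with the corrected rule all four families match the lemma as claimed.
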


\begin{proof} This follows from an analogous decomposition for 
$\GL_4(\bfQ_p)$. 
\end{proof}

Let $g = T_{\fp} f$. Then for $q \in \GL_2(\AF)$ and $\sigma \in S(\AQ)$, 
we 
can write $$g \left( \bmat q & \sigma \hat{q} \\ & \hat{q} \emat \right) 
= \sum_{h \in S} c_g(h,q) e_{\AQ}(\tr (h \sigma)).$$ Define the following 
matrices $$\alpha'_a = \left(\bmat p & -a \\ & 1 \emat, I_2 \right) \in 
\GL_2(\bfQ_p) \times \GL_2(\bfQ_p)
\quad 
a=0,1, \dots, 
p-1$$ and $$\alpha'_p = \left( \bmat 1 \\ & p \emat, I_2 \right),\in 
\GL_2(\bfQ_p) \times \GL_2(\bfQ_p).$$ 
For $a=0,1, 
\dots, p$, set $\alpha_a= \iota_p(\alpha'_a) \in \GL_2(F_p)$.
\begin{lemma} \label{Fourier1} One has the following formula
$$c_g(h,q) = p^2 \sum_{a=0}^{p} 
 c_f(h, q \alpha_a) + \sum_{a=0}^{p} c_f(h, q \hat{\alpha}_a). 
$$\end{lemma}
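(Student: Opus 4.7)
The plan is to expand $g = T_{\fp} f$ directly using the coset decomposition from Lemma \ref{decomp1} and reduce each summand to a Fourier coefficient of $f$. Writing
$$g\left(\bmat I_2 & \sigma \\ & I_2 \emat \bmat q \\ & \hat q \emat\right) = \sum_{\alpha} f\left(\bmat I_2 & \sigma \\ & I_2 \emat \bmat q \\ & \hat q \emat j_p(\alpha)^{-1}\right),$$
where $\alpha$ runs over the four families of representatives in Lemma \ref{decomp1}, I would treat each family in turn.

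For each representative $\alpha$, I would carry out an Iwasawa-type decomposition at the $p$-place,
$$\bmat I_2 & \sigma \\ & I_2 \emat \bmat q \\ & \hat q \emat j_p(\alpha)^{-1} = \gamma \cdot \bmat I_2 & \sigma' \\ & I_2 \emat \bmat q' \\ & \hat{q'} \emat \cdot \kappa,$$
with $\gamma \in G(\bfQ)$ and $\kappa \in K$, tracking how $q' \in \GL_2(\AF)$ and $\sigma' \in S(\AQ)$ depend on $\alpha$. The left $G(\bfQ)$-invariance and right $K$-invariance of $f$ then reduce the summand to $\sum_{h} c_f(h,q') e_{\AQ}(\tr(h\sigma'))$, and $c_g(h,q)$ is obtained by collecting the coefficient of $e_{\AQ}(\tr(h\sigma))$.

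Based on the shape of the representatives, I expect the following correspondence. For the $p^3$ Type 1 cosets parametrized by $a,b,c \in \bfZ/p\bfZ$, only $a$ will affect $q'$, giving $q' = q\alpha_a$ for $a \in \{0,1,\dots,p-1\}$ (up to a $K$-twist), while $b,c$ contribute a shift $\tau_{b,c}$ to $\sigma'$ that can be realized as an element of $S(\bfQ)$ via a suitable choice of $\gamma$; by $e_{\AQ}$-triviality on $\bfQ$, the sum over $b,c$ then yields $p^2$ identical copies of $c_f(h, q\alpha_a)e_{\AQ}(\tr(h\sigma))$. Analogously, the $p^2$ Type 2 cosets will all yield the remaining case $q' = q\alpha_p$, contributing $p^2\, c_f(h, q\alpha_p)$. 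The $p$ Type 3 cosets will distribute among $q' = q\hat\alpha_f$ for $f \in \{0,\dots,p-1\}$, and the single Type 4 coset gives $q' = q\hat\alpha_p$; neither family carries absorbable parameters, so each gives a single copy. Assembling yields the claimed formula.

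The main technical obstacle will be performing the Iwasawa decomposition explicitly at the $p$-place: using $\iota_p$, the calculation takes place in $\GL_4(\bfQ_p)\times\GL_4(\bfQ_p)$, and the hermitian/non-hermitian block structure must be respected throughout. The key delicate point is to verify rigorously that in Types 1 and 2 the parameter-dependent shifts $\tau_{b,c}$ (resp.\ $\tau_{d,e}$) in $\sigma'$ genuinely descend to elements of $S(\bfQ)$, so that no nontrivial character-sum restriction on $h$ is picked up and the coefficient really is the clean $p^2$ appearing in the statement.
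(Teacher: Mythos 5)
Your reduction of the lemma to the coset decomposition of Lemma \ref{decomp1}, and your matching of the four families of representatives to the matrices $\alpha_0,\dots,\alpha_{p-1}$ (Type 1), $\alpha_p$ (Type 2), $\hat\alpha_0,\dots,\hat\alpha_{p-1}$ (Type 3) and $\hat\alpha_p$ (Type 4), with multiplicities $p^2$, $p^2$, $1$, $1$, is exactly the computation the paper has in mind (its own proof is only the sentence ``straightforward calculation''), and the bookkeeping $p^3+p^2+p+1=p^2(p+1)+(p+1)$ checks out. Concretely, for a Type 1 representative $\alpha$ one finds $j_p(\alpha)^{-1}=\bsmat I_2&\tau\\&I_2\esmat\bsmat A\\&\hat A\esmat$ with $A=\alpha_a$ and $\tau=-\iota_p\left(\bsmat b&c\\0&0\esmat,\bsmat b&0\\c&0\esmat\right)\in S(\bfZ_p)$, so the corresponding summand equals $\sum_h c_f(h,q\alpha_a)\,e_{\AQ}(\tr h\sigma)\,e_p\bigl(\tr((q^*hq)_p\tau)\bigr)$; Types 3 and 4 have no unipotent part at all.

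However, the mechanism you propose for the step you yourself flag as delicate --- that the shifts $\tau_{b,c}$ ``descend to elements of $S(\bfQ)$'' so that $e_{\AQ}$-triviality on $\bfQ$ collapses the sum over $b,c$ to a factor $p^2$ --- is not correct and would not go through. The shift $q\tau q^*$ perturbs $\sigma$ only at the place $p$, whereas a global unipotent $\bsmat I_2&s\\&I_2\esmat$ with $s\in S(\bfQ)$ pulled out on the left perturbs $\sigma$ at every place including $\infty$; no choice of $\gamma\in G(\bfQ)$ absorbs a purely local shift. The correct reason that $e_p(\tr((q^*hq)_p\tau))=1$ is local duality: $\tau\in S(\bfZ_p)$, and for every $h$ in the Fourier support --- i.e.\ with $(q^*hq)_p\in T_p$, which by \cite{Shimura97}, Proposition 18.3(2) (quoted in the Remark after Definition \ref{Maass form}) is the only case in which $c_g(h,q)$ can be nonzero --- the trace pairing of $T_p$ against $S(\bfZ_p)$ lands in $\bfZ_p$; at a split prime $T_p\cong M_2(\bfZ_p)$ is self-dual, so this is immediate. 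This is genuinely where the hypothesis enters: for $h$ with $(q^*hq)_p\notin T_p$ the sums over $b,c$ (resp.\ $d,e$) become nontrivial character sums that can vanish while individual $c_f(h,q\alpha_a)$ need not, so the displayed identity should be read on the support of $c_g(\cdot,q)$ --- which is all that is used in Proposition \ref{invariance}.
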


\begin{proof} This is a straightforward calculation using Lemma 
\ref{decomp1}. \end{proof} 

\begin{prop} \label{invariance} The Maass space is invariant under the 
action of $T_{\fp}$, i.e., if $f\in \mM_k^{\tuM}$, then $g\in 
\mM^{\tuM}_k$. 
\end{prop}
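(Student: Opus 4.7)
By Propositions \ref{independence} and \ref{to check}, it suffices to verify \eqref{Maass condition} for $c_g(h,b)$ (with $g := T_\fp f$) as $h$ ranges over $S(\bfQ)$ and $b$ ranges over a single conveniently chosen base $\mB$. I would choose $\mB$ as in Corollary \ref{scalarcor}, adapting the construction so that each $b = \alpha_{\fq_b} I_2$ is supported at split primes $\fq_b$ disjoint from $\{\fp, \ov\fp\}$; this is possible since $h_F$ is odd and every ideal class contains infinitely many split primes by Tchebotarev. For such $b$ one has $b^* b = I_2$, the finite determinant $\det b_{\bff}^* b_{\bff}$ is a $p$-adic unit, and $b$ commutes with every $h \in S(\bfQ)$.

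With this choice, I would apply Lemma \ref{Fourier1} to expand $c_g(h,b)$ as a sum of $c_f(h, b\alpha_a)$ and $c_f(h, b\hat\alpha_a)$ for $a = 0,\ldots,p$. A direct computation gives $c_F(\det \alpha_a) = [\fp]$ and $c_F(\det \hat\alpha_a) = [\ov\fp]^{-1} = [\fp]$ in $\Cl_F$, so all $2p+2$ matrices $b\alpha_a$ and $b\hat\alpha_a$ lie in one and the same class $b' \in \mB$, namely the unique element with $c_F(\det b') = c_F(\det b) \cdot [\fp]$. Fix decompositions $b\alpha_a = \g_a b' \kappa_a$ and $b\hat\alpha_a = \g'_a b' \kappa'_a$. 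Since both $\alpha_a$ and $\hat\alpha_a$ have trivial archimedean components, applying \eqref{Maass condition} for $f$ yields
\[
c_f(h, b\alpha_a) = e^{-2\pi\tr h}(\det \g_a^*)^{-k}\sum_{d \mid \epsilon((b\alpha_a)_{\bff}^* h (b\alpha_a)_{\bff})} d^{k-1} c_{b',f}(D_F d^{-2} p \det h),
\]
together with an analogous formula for the $\hat\alpha$-terms in which the factor $p$ is absent and the summand vanishes unless $(b\hat\alpha_a)_{\bff}^* h (b\hat\alpha_a)_{\bff} \in T_{\AQ}$.

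The final step is a combinatorial collapse. Using Proposition \ref{diagonal}, I would reduce to the case $h \equiv \epsilon(h)\diag(a_0, d_0) \pmod{p^n T}$ with $p \nmid a_0$ and $n$ large, which is legitimate since all relevant $\epsilon$-values and arguments of $c_{b',f}$ depend on $h$ only modulo a bounded power of $p$. For such diagonalized $h$ the quantities $\epsilon((b\alpha_a)_{\bff}^* h (b\alpha_a)_{\bff})$ and $\epsilon((b\hat\alpha_a)_{\bff}^* h (b\hat\alpha_a)_{\bff})$ are explicit and depend on $a$ only through a small number of ranges; reorganizing the double sum from Lemma \ref{Fourier1} in these ranges produces a single expression of the form $e^{-2\pi\tr h}\sum_{d \mid \epsilon(b_{\bff}^* h b_{\bff})} d^{k-1} c_{b,g}(D_F d^{-2} \det h)$ for a well-defined function $c_{b,g}: \bfZ_{\geq 0} \to \bfC$ built out of $c_{b',f}$, verifying \eqref{Maass condition} for $g$.

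The main obstacle is the combinatorial matching in this last step: the prefactor $p^2$ in Lemma \ref{Fourier1} together with the weights $d^{k-1}$ must conspire with the explicit $\epsilon$-values so that the contributions from $b\alpha_a$ and from $b\hat\alpha_a$ combine into a single Maass-type divisor sum. This parallels the inert-prime argument of \cite{Gritsenko90, Krieg91}, but is genuinely new because at a split prime two distinct families of cosets must be reconciled.
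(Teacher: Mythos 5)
Your outline reproduces the paper's strategy step for step: reduce via Propositions \ref{independence} and \ref{to check} to checking \eqref{Maass condition} for $c_g(h,b)$ with $\mB$ as in Corollary \ref{scalarcor}, expand $c_g(h,b)$ by Lemma \ref{Fourier1}, observe that all $2p+2$ translates $b\alpha_a$, $b\hat\alpha_a$ fall into one class $b'=b\alpha_p$, apply the Maass condition for $f$ to each term, and collapse the resulting double sum using Proposition \ref{diagonal}. So the route is the same. But there are two problems. First, a concrete computational error: for the $\hat\alpha_a$-terms the factor of $p$ is not ``absent.'' One has $\det(\hat\alpha_a^*\hat\alpha_a)=\det(\alpha_a^*\alpha_a)^{-1}$, so $\val_p(\det \hat\alpha_a^*\hat\alpha_a)=-1$ and the Maass condition produces $c_{b',f}(D_F d^{-2}\det h\cdot p^{-1})$, not $c_{b',f}(D_F d^{-2}\det h)$; moreover $(\det\gamma_a'^{\,*})^{-k}=p^{-k}$ for these terms, which is where the coefficient $p^k$ in front of the second family comes from. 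If you carry ``absent'' through, your candidate for $c_{b,g}$ comes out wrong and the final divisor-sum identity fails.

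Second, the step you defer --- the ``combinatorial collapse'' --- is precisely where the content of the proposition lies, and a proof must exhibit the function $c_{b,g}$ explicitly and verify \eqref{Maass condition} in all cases. The paper does this: writing $h=\epsilon(h)h'$, $D=D_F\det h$, $D'=D_F\det h'$, it uses Proposition \ref{diagonal} to evaluate $\epsilon(\alpha_a^*h\alpha_a)$ and $\epsilon(\hat\alpha_a^*h\hat\alpha_a)$, splits into four cases according to whether $p\mid D'$ and whether $p\mid\epsilon(h)$ (formula \eqref{mess}), and then defines
$c_{b,g}(D)=p^2(p+1)\,c_{b',f}(Dp)+p^k(p+1)\,c_{b',f}(Dp^{-1})$,
with the convention $c_{b',f}(n)=0$ for $n\notin\bfZ_+$, after which the verification is a direct check against \eqref{mess}. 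Asserting that the sums ``reorganize'' into Maass form, and flagging this as the main obstacle, leaves the proof incomplete at its only nontrivial point; the cancellations are genuinely delicate (the four cases produce different collections of divisors $d$), and the correct answer depends on getting the $p^{\pm 1}$ and $p^{-k}$ factors above right.
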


\begin{proof} Choose a base $\mB$ as in Corollary \ref{scalarcor}. In 
particular we have $\epsilon(b^* h 
b) = \epsilon(h)$ and $\val_p (\det b^* b) = 0$. By Propositions \ref{to 
check} and \ref{independence} 
it is enough to show that there exist functions $c_{b, g}: \bfZ_+ 
\rightarrow \bfC$ ($b \in \mB$) such that \be \label{condrep} c_g(h,b) = 
e^{-2\pi\tr  h} 
\sum_{\substack{ d \in \bfZ_+ \\ d \mid \epsilon(h)}} d^{k-1} c_{b,g} 
\left( D_F d^{-2} \det h \right).\ee For $b \in \mB$, set $b' = b 
\alpha_p$. Note that all of the matrices: $b \a_a$, 
$b \hat{\a}_a$, ($a=0,1, \dots, p$) belong to the same 
class $b'$. Denote any of those matrices by $q$. Then $q = 
\gamma_{b', q} b' \kappa_q \in \GL_2(F) b' K'$ and it is easy to see that 
$$\det \gamma_{b', q}^k = \begin{cases} 1 & q = b \a_a, \hf a = 0, 1, 
\dots, p \\ p^{-k} & q = b \hat{\a}_a \hf a = 0, 1, 
\dots, p , 
\end{cases}$$ and $$\val_p(\det q^* q ) =\begin{cases} 1 & q = b 
\a_a \hf a = 0, 1, 
\dots, p \\ -1 & q = b \hat{\a}_a \hf a = 0, 1, 
\dots, p .  
\end{cases}$$ Write $h = \epsilon(h) h'$. One has $\epsilon(h') = 1$. 
Set $D= D_F \det h$ and $D' = D_F \det h'$.
Using Lemma 
\ref{Fourier1} and the fact that $f$ is a Maass form, we obtain 
\begin{multline} \label{Fourier2} c_g(h,b) = e^{-2 \pi \tr h} \times 
\left( p^2  
\sum_{a=0}^p \sum_{\substack {d \in \bfZ_+ \\ d \mid \epsilon(\a_a^* h 
\a_a)}} d^{k-1} c_{b',f}(D d^{-2} p) + \right. \\
+ \left. p^k \sum_{a=0}^p \sum_{\substack {d \in \bfZ_+ \\ d \mid 
\epsilon(\hat{\a}_a^* 
h
\hat{\a}_a)}} d^{k-1} c_{b',f}(D d^{-2} p^{-1})\right). \end{multline}
Using Proposition \ref{diagonal}, one can relate $\epsilon(\a_a^* h \a_a)$ 
and $\epsilon(\hat{\a}_a^* h \hat{\a}_a)$ to $\epsilon(h)$ for $a=0,1, 
\dots, p$, and then 
(\ref{Fourier2}) becomes \begin{multline} \label{mess} c_g(h,b)  = 
e^{-2 \pi 
\tr h} p^2 \sum_{0} A^{(1)}_d + 
e^{-2 \pi \tr h} \times \\
\times \begin{cases} p^3 \sum_0 
A^{(1)}_d 
& p \nmid D', p \nmid \epsilon(h)\\
p^k(p+1) \sum_{-1} A^{(-1)}_d + p^3 \sum_0 A_d^{(1)} & p \nmid D', p \mid 
\epsilon(h) 
\\
p^2(p-1) \sum_0 A_d^{(1)} + p^2 \sum_1 A_d^{(1)} + p^k \sum_0  A^{(-1)}_d& 
p \mid 
D', p 
\nmid 
\epsilon(h)\\
p^2(p-1) \sum_0 A_d^{(1)} + p^2 \sum_1 A_d^{(1)} + p^k \sum_0  A^{(-1)}_d 
+ 
p^{k+1}  
\sum_{-1} 
 A^{(-1)}_d 
& p \mid 
D', p \mid \epsilon(h), \end{cases} \end{multline}
where $\sum_n A_d^{(m)}= \sum_{d \mid
p^n \epsilon(h)}A_d^{(m)}$, $A^{(m)}_d = d^{k-1} c_{b',f}(D d^{-2} p^m)$.

For $D$ in the image of the map $h
\mapsto D_F \epsilon(h)^{-2} \det h$ and $b \in \mB$ we make the following
definition
\be \label{maass1} c_{b,g}(D) =  p^2(p+1 ) c_{b', f} (Dp) + p^k (p+1)
c_{b',f} (Dp^{-1}) ,\ee where we assume that
$c_{b',f}(n)=0$ when $n \not\in \bfZ_+$. If $D$ is not in the image of 
that map, we set $c_{b,g}(D)=0$. Note that we clearly have 
$$c_g(h,b) = e^{-2 \pi \tr h} c_{b,g} (D_F \det h)$$ for every $h$ with 
$\epsilon(h)=1$. Thus to check if $g$ lies in the Maass space we just need 
to check that (\ref{condrep}) holds with $c_{b,g}$ defined by  
(\ref{maass1}). 
This is an easy calculation using (\ref{mess}). \end{proof}

\subsection{Invariance under $U_{\fp}$}

This is completely analogous to the proof for $T_{\fp}$, hence we only 
include the relevant formulas for the reader's convenience.
\begin{lemma} We have the following decomposition:
\be \begin{split} U_{\fp} = & \bigsqcup_{b,c,d,e \in \bfZ/ p \bfZ} K_p 
\left( \bsmat p^{-1} & & bp^{-1} & dp^{-1} \\ & p^{-1} & cp^{-1} & ep^{-1} 
\\ && 1 \\ &&&1 \esmat , \bsmat 1 && b & c \\ & 1 & d & e \\ &&p \\ &&& p 
\esmat \right) \sqcup \\
& \bigsqcup_{a,c,f \in \bfZ/p \bfZ} K_p \left( \bsmat 1\\  
-fp^{-1} 
& p^{-1} &  cp^{-1} \\ && 1 \\ && -ap^{-1} & p^{-1} \esmat, \bsmat 1 & a 
&& c \\ &p 
\\ &&1&f \\ &&& p \esmat \right) \sqcup\\
& \bigsqcup_{e,f \in \bfZ/p\bfZ} K_p\left( \bsmat 1 \\ -fp^{-1} & p^{-1} 
&& 
ep^{-1} \\ && p^{-1} \\ &&&1 \esmat, \bsmat p \\ & 1 && e \\ && 1& f \\ 
&&& p \esmat\right) \sqcup \\
& \bigsqcup_{a,b \in \bfZ/ p \bfZ} K_p\left( \bsmat p^{-1} && bp^{-1} \\ & 
1 
\\ && 1 \\ && -ap^{-1} & p^{-1} 
\esmat , \bsmat 1 & a& b \\ & p \\ && p \\ &&&1 \esmat \right) \sqcup \\
& \bigsqcup_{d\in \bfZ/ p \bfZ} K_p\left( \bsmat p^{-1} &&& dp^{-1} \\ &1 
\\ 
&& p^{-1} \\ &&& 1 \esmat , \bsmat p\\ & 1& d \\ &&p \\ &&&1 \esmat 
\right) \sqcup \\
& K_p \left( \bsmat 1 \\ & 1 \\ && p^{-1} \\ &&& p^{-1} \esmat, \bsmat p 
\\ 
& 
p \\ && 1 \\ &&&1 \esmat \right).\end{split} \ee

\end{lemma}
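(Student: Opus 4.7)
The plan is to reduce to a coset decomposition in $\GL_4(\bfQ_p)$, exactly as in the proof of Lemma \ref{decomp1}. Under the isomorphism $G(\bfQ_p) \xrightarrow{\sim} \GL_4(\bfQ_p)$, $(g_1, g_2) \mapsto g_1$, one has $K_p \leftrightarrow \GL_4(\bfZ_p)$, and $U_{\fp}$ corresponds to the $\GL_4(\bfZ_p)$-double coset of $\diag(p^{-1}, p^{-1}, 1, 1)$. The task then becomes: enumerate left $\GL_4(\bfZ_p)$-cosets in this double coset and, for each representative $B_1$, record the companion matrix $B_2 = -J(B_1^t)^{-1}J$ dictated by the defining condition $g_1 J g_2^t = J$ of $G(\bfQ_p)$.

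First I would parametrise the left cosets $\GL_4(\bfZ_p)\alpha$. Passing to $\alpha^{-1}$, which has elementary divisors $(p, p, 1, 1)$, identifies them with sublattices $L \subset \bfZ_p^4$ of index $p^2$ such that $\bfZ_p^4 / L \cong (\bfZ/p\bfZ)^2$, equivalently with $2$-dimensional $\bfF_p$-subspaces of $\bfF_p^4 = \bfZ_p^4/p\bfZ_p^4$. Stratifying by the pair $(i, j)$, $1 \le i < j \le 4$, of pivot columns in reduced row echelon form yields six Schubert cells of sizes $p^{(3-i)+(4-j)}$, namely $p^4, p^3, p^2, p^2, p, 1$, summing to $\binom{4}{2}_p = (p^2 + 1)(p^2 + p + 1)$. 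These six counts match precisely the six families in the statement of the lemma.

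Second, for each pivot pattern $(i, j)$ I would write down the standard echelon-type representative $B_1$: $p^{-1}$ in positions $(i, i)$ and $(j, j)$, $1$ on the remaining diagonal, and free $\bfZ/p\bfZ$ parameters in the off-diagonal slots dictated by the RREF shape. A routine check shows these agree, after multiplication by suitable elements of $K_p$ on the left, with the first components listed in the statement. Finally, for each such $B_1$ I would compute $B_2 = -J(B_1^t)^{-1}J$ using the block formula $-J \bsmat a & b \\ c & d \esmat J = \bsmat d & -c \\ -b & a \esmat$ recorded immediately above Lemma \ref{generation}. In every family $B_1^t$ is block triangular with diagonal $2 \times 2$ blocks equal to $I_2$ or $p^{-1} I_2$, so $(B_1^t)^{-1}$ is read off directly, and the resulting $B_2$ matches the second component in the lemma.

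The only genuine obstacle is bookkeeping: six strata must be handled and signs tracked through $-J(B_1^t)^{-1}J$. No conceptually new ingredient beyond what already appeared in Lemma \ref{decomp1} is required.
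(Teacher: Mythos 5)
Your proposal is correct and follows exactly the route the paper intends: the paper gives no argument beyond "analogous decomposition for $\GL_4(\bfQ_p)$" (for $T_{\fp}$, with the $U_{\fp}$ case declared analogous), and your reduction to left cosets in $\GL_4(\bfZ_p)\diag(p^{-1},p^{-1},1,1)\GL_4(\bfZ_p)$, the Schubert-cell count $p^4+p^3+p^2+p^2+p+1=(p^2+1)(p^2+p+1)$ matching the six families, and the computation of the second component via $B_2=-J(B_1^t)^{-1}J$ is precisely the omitted verification. No gap.
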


As before, let $g = T_{\fp} f$. Define matrices:
$$\beta'_p = \left( \bmat p \\ & p \emat, I_2\right) \in 
\GL_2(\bfQ_p)\times 
\GL_2(\bfQ_p),$$
$$\gamma'_a = \left( \bmat 1 \\ a & p \emat, I_2\right) \in \GL_2(\bfQ_p) 
\times \GL_2(\bfQ_p),\quad a=0,1,\dots, p-1,$$
$$\gamma'_p = \left( \bmat p \\ & 1 \emat, I_2 \right) \in \GL_2(\bfQ_p)
\times \GL_2(\bfQ_p),$$ and set $\beta_p = \iota_p(\beta'_p)\in 
\GL_2(F_p)$, $\gamma_a= \iota_p(\gamma'_a)\in \GL_2(F_p)$ ($a=0,1, \dots, 
p$). 

\begin{lemma} One has the following formula
$$c_g(h,q) = p^4 c_f(h,q \beta_p) + c_f(h, q \hat{\beta}_p) + p 
\sum_{a=0}^p \sum_{b=0}^p c_f(h, q \gamma_a 
\hat{\gamma}_b).$$ \end{lemma}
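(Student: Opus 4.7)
My plan is to carry out essentially the same kind of computation that produced Lemma \ref{Fourier1} for the operator $T_{\fp}$, only now with the longer coset list for $U_{\fp}$ supplied by the preceding lemma. The guiding identity is the definition
\begin{equation*}
g\!\left(\bmat q & \sigma \hat{q} \\ & \hat{q} \emat\right)
= \sum_{\alpha} f\!\left(\bmat q & \sigma \hat{q} \\ & \hat{q} \emat j_p(\alpha)^{-1}\right),
\end{equation*}
where $\alpha$ runs through the $p^4+p^3+p^2+p^2+p+1$ coset representatives listed above. For each $\alpha$ I would push $j_p(\alpha)^{-1}$ from the right across the upper triangular matrix, using the identity $\left(\begin{smallmatrix} a & b \\ c & d\end{smallmatrix}\right) = \left(\begin{smallmatrix} 1 & bd^{-1}\\ & 1\end{smallmatrix}\right)\left(\begin{smallmatrix} a-bd^{-1}c & \\ c & d\end{smallmatrix}\right)$ when $d$ is invertible, in order to put the resulting matrix into the parabolic form $\left(\begin{smallmatrix} q' & \sigma' \hat q' \\ & \hat q'\end{smallmatrix}\right)\cdot\kappa$ with $\kappa\in K$. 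Once this is done, the Fourier coefficient $c_f(h,q')$ can be extracted by averaging against $e_{\AQ}(-\tr h\sigma)$ over $\sigma \in S(\bfQ)\backslash S(\AQ)$, and the integration over the nilpotent parameters (those coming from $\sigma$-translation absorbed into $K$) produces the explicit $p$-power weights.

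The six groups in the decomposition of $U_{\fp}$ should collect as follows. The first group, parametrized by $(b,c,d,e)\in(\bfZ/p)^4$, has trivial lower-triangular block and introduces an honest $\sigma$-translation by a $p^{-1}$-scaled hermitian matrix; after re-absorbing this into the $\sigma$-variable and the integration in $\sigma$, every one of these $p^4$ cosets produces the same coefficient $c_f(h,q\beta_p)$, giving the first term. The last single coset produces $c_f(h,q\hat\beta_p)$ directly, since the block structure is already lower-triangular with diagonal $(p^{-1},p^{-1})$ on the left and $(1,1)$ on the right. The four intermediate groups, indexed respectively by $(a,c,f), (e,f), (a,b), (d)$, are the mixed ones: after the Bruhat rearrangement they feed into the double sum $\sum_{a,b} c_f(h,q\gamma_a\hat\gamma_b)$, and an index count $p^3+p^2+p^2+p = p(p+1)^2$ matches the prefactor $p$ in front of that double sum.

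The main obstacle, I expect, is precisely this bookkeeping in the mixed groups. The parameters $a,b,c,d,e,f$ come from different coset groups yet must re-index correctly as products $\gamma_a\hat\gamma_b$ with $a,b\in\{0,1,\dots,p\}$; in particular the boundary cases $a=p$ or $b=p$ must be matched against the groups of the decomposition where the parameter is absent (the fourth, fifth, and in part the second/third groups). The correct way to organize this is to observe that the set $\{\gamma_a\}_{a=0}^{p}$ is, up to left $K'$-action, a set of representatives for $K'\backslash K' \cdot \operatorname{diag}(1,p)\cdot K'$ in $\GL_2(F_p)$, and then use the obvious $F_p$-rational analogue of the $\GL_2$-coset calculation; once the indexing of the $\gamma_a\hat\gamma_b$ is pinned down, the exponents $p^4$, $1$, and $p$ follow immediately from counting how many original cosets map to each $(a,b)$ after the rearrangement and integration. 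The remaining computation is then the same routine manipulation that underlies Lemma \ref{Fourier1}, with no additional analytic input required.
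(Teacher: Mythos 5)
Your plan is the same direct computation the paper relies on: expand $g$ over the coset representatives of $U_{\fp}$ from the preceding lemma, put each translate back into parabolic form, and collect terms, exactly as in Lemma \ref{Fourier1} for $T_{\fp}$. Your bookkeeping is consistent --- the first group of $p^4$ cosets gives $p^4\,c_f(h,q\beta_p)$, the single last coset gives $c_f(h,q\hat{\beta}_p)$, and the four mixed groups of sizes $p^3+p^2+p^2+p=p(p+1)^2$ distribute as weight $p$ over the $(p+1)^2$ pairs $(a,b)$ (with the boundary indices $a=p$ or $b=p$ absorbed by the third, fourth and fifth groups) --- so the proposal is correct and essentially identical to the paper's argument.
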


\begin{prop} \label{invariance2} The Maass space is invariant under the 
action of $U_{\fp}$. \end{prop}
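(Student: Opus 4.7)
The plan is to follow the strategy of the proof of Proposition~\ref{invariance} for $T_\fp$ essentially verbatim. Choose a base $\mB$ as in Corollary~\ref{scalarcor}, so that each $b\in\mB$ is scalar with $b^*b=I_2$; this guarantees $\epsilon(b^*hb)=\epsilon(h)$ and $\val_p(\det b^*b)=0$. By Propositions~\ref{to check} and~\ref{independence}, it is enough to construct functions $c_{b,g}\colon\bfZ_{\geq 0}\to\bfC$ such that
\begin{equation*}
c_g(h,b)=e^{-2\pi\tr h}\sum_{\substack{d\in\bfZ_+\\ d\mid\epsilon(h)}}d^{k-1}c_{b,g}(D_F d^{-2}\det h).
\end{equation*}

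Substitute the Fourier formula
\begin{equation*}
c_g(h,b)=p^4 c_f(h,b\beta_p)+c_f(h,b\hat\beta_p)+p\sum_{a,c=0}^{p}c_f(h,b\gamma_a\hat\gamma_c)
\end{equation*}
from the preceding lemma and expand each $c_f(h,\cdot)$ using the Maass relation~(\ref{Maass condition}) for $f$. A direct calculation gives $\det\beta_p=\iota_p(p^2,1)$, $\det\hat\beta_p=\iota_p(1,p^{-2})$ and $\det(\gamma_a\hat\gamma_c)=\iota_p(p,p^{-1})$. Because $\fp\bar\fp=(p)$ is principal, all three of these adelic determinants represent the common ideal class $[\fp]^2 \in \Cl_F$ up to principals; hence $b\beta_p$, $b\hat\beta_p$ and $b\gamma_a\hat\gamma_c$ all belong to a common class $b'\in\mB$. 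The accompanying values $\val_p(\det q^*q)$ are $2$, $-2$ and $0$ respectively, and the factors $(\det\gamma_{b',q}^*)^{-k}$ are easily read off.

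Next, relate $\epsilon(\beta_p^*h\beta_p)$, $\epsilon(\hat\beta_p^*h\hat\beta_p)$ and $\epsilon((\gamma_a\hat\gamma_c)^*h(\gamma_a\hat\gamma_c))$ to $\epsilon(h)$ using Proposition~\ref{diagonal} together with Lemma~\ref{epsilon1}, splitting into cases according to whether $p\mid\epsilon(h)$ and whether $p\mid D_F\det h'$ (with $h=\epsilon(h)h'$), exactly as in equation~(\ref{mess}). This produces an expression for $c_g(h,b)$ that naturally suggests the definition
\begin{equation*}
c_{b,g}(D)=\mu_1\,c_{b',f}(Dp^2)+\mu_0\,c_{b',f}(D)+\mu_{-1}\,c_{b',f}(Dp^{-2})
\end{equation*}
for explicit combinatorial constants $\mu_m$ (with the convention $c_{b',f}(n)=0$ for $n\notin\bfZ_+$), after which the Maass relation for $g$ is checked case by case analogously to the end of the proof of Proposition~\ref{invariance}.

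The main obstacle is purely combinatorial: the decomposition of $U_\fp$ has many more cosets than that of $T_\fp$, and the cross terms $b\gamma_a\hat\gamma_c$ contribute via a double sum over $(a,c)$, producing three shifted arguments $Dp^2$, $D$, $Dp^{-2}$ rather than the two shifts $Dp^{\pm 1}$ encountered for $T_\fp$. The crucial simplification that makes the argument close unchanged is that \emph{all three} types of translated arguments land in the same class $b'$, so only a single Krieg-type Maass function $c_{b',f}$ enters the final formula.
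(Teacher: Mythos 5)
Your proposal follows the paper's own proof essentially verbatim: same reduction via Propositions \ref{to check} and \ref{independence}, same observation that $b\beta_p$, $b\hat\beta_p$, $b\gamma_a\hat\gamma_c$ all lie in one class $b'$ with $\val_p(\det q^*q)=2,-2,0$, and the same three-term ansatz for $c_{b,g}(D)$ (the paper's formula (\ref{for1})). The only slight imprecision is that the coefficients $\mu_m$ cannot be taken independent of $D$ --- in (\ref{for1}) the coefficient of $c_{b',f}(D)$ changes according to whether $p\mid D$ --- but since you already defer to a case-by-case check, this is absorbed there.
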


\begin{proof} This is similar to the proof of Proposition 
\ref{invariance}. Let $b$, $D$, $D'$, $h$, $h'$ be as in that proof. Then 
for all 
$a$, $c$, we 
see that $b\beta_p$, $b \hat{\beta}_p$, $b \gamma_a \hat{\gamma}_c$ all 
lie in the same class $b'= b \beta_p$. 
One has $$\det 
\gamma_{b', q}^k = \begin{cases} 1 & q = b \beta_p \\ p^{-k} & q = b 
\gamma_a \hat{\gamma}_c, \hf a,c \in \{ 0,1, \dots, p \} \\ p^{-2k} & q = 
b \hat{\beta}_p, \end{cases}$$ and $$\val_p (\det q^* q) = \begin{cases} 2 
& q = b \beta_p \\ 0 & q = b
\gamma_a \hat{\gamma}_c, \hf a,c \in \{ 0,1, \dots, p \} \\ -2 & q =
b \hat{\beta}_p. \end{cases}$$

Using Proposition \ref{diagonal} as in the proof of Proposition 
\ref{invariance} we obtain
\begin{multline} c_g(h,b) = e^{-2 \pi \tr h} p^4 \sum_1A^{(2)}_d + p^{2k} 
\sum_{-1} A^{(-2)}_d + p^{k+1}(p+1) \sum_0 A^{(0)}_d + p^{k+3} \sum_{-1} 
A^{(0)}_d+\\
+ e^{-2 \pi \tr h} p^{k+1} \begin{cases} p \sum_{-1} A^{(0)}_d & p 
\nmid D'\\
p \sum_0 A^{(0)}_d & p \mid D', \hs p^2 \nmid D'\\
\sum_1 A^{(0)} + (p-1) \sum_0 A^{(0)}& p^2 \mid D',\end{cases} 
\end{multline}
where if there is no $d$ dividing $p^n \epsilon(h)$, we set $\sum_n = 0$.
For $D$ in the image of the map $h \mapsto D_F \epsilon(h)^{-2} \det h$, 
we make the following definition: \begin{multline} \label{for1}  
c_{b,g}(D) = 
p^4c_{b',f}(Dp^2) + (p^{k+3}+p^{k+2} + p^{k+1}) c_{b',f}(D) + \\
+ \begin{cases} 0 & p \nmid D \\
p^{k+2} c_{b'f}(D) & p \mid D, \hs p^2 \nmid D \\
p^{k+2} c_{b',f}(D) + p^{2k} c_{b',f}(Dp^{-2}) & p^2 \mid D. \end{cases} 
\end{multline} We now check as in the proof of Proposition 
\ref{invariance} that $g$ is a Maass form. 
\end{proof}

\subsection{Invariance under $\Delta_{\fp}$}

Let $g = \Delta_{\fp}f $ and set $\delta_p = \iota_p((p^{-1}I_4, p I_4))$. 
Then we have $$c_g(h,q) = c_f(h, q \delta_p).$$
\begin{prop} \label{invariance3} The Maass space is invariant under the 
action of $\Delta_p$. \end{prop}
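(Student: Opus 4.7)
The plan is to mimic the template of Propositions \ref{invariance} and \ref{invariance2}, but with a crucial simplification: $\Delta_{\fp}$ is a single coset, because $\iota_p((pI_4, p^{-1}I_4))$ is central in $G(\bfQ_p)$, which explains why the Fourier formula $c_g(h, q) = c_f(h, q \delta_p)$ has only one term. By Propositions \ref{to check} and \ref{independence}, it suffices to exhibit functions $c_{b, g} \colon \bfZ_{\geq 0} \to \bfC$ indexed by a base $\mB$ chosen as in Corollary \ref{scalarcor} (so that every $b$ is a scalar matrix with $bb^* = I_2$), such that condition (\ref{Maass condition}) holds on all pairs $(h, b)$ with $b \in \mB$.

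Two driving facts about $\delta_p$ do all the work. First, viewed as a scalar acting on the $2\times 2$ matrix $q$, $\delta_p$ lies in $\GL_2(F_p)$ and satisfies $\delta_p \ov{\delta_p} = 1$, i.e.\ $\delta_p^* \delta_p = I_2$. Hence $\delta_p^* h \delta_p = h$ for every hermitian $h$ and $\det(\delta_p^* \delta_p) = 1$, so (combining with $bb^* = I_2$) one obtains $\epsilon\bigl((b\delta_p)^* h (b\delta_p)\bigr) = \epsilon(h)$ and $\val_p \det\bigl((b\delta_p)^*(b\delta_p)\bigr) = 0$. Second, as an idele supported only at $p$, with components $p^{-1}$ at $\fp$ and $p$ at $\ov{\fp}$, $\delta_p$ represents the class $[\fp^{-1} \ov{\fp}] = [\fp]^{-2}$ in $\Cl_F$. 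Consequently, multiplication by $\delta_p$ induces a permutation $b \mapsto b'$ of $\mB$, where $b'$ is the unique element of $\mB$ with $[b \delta_p] = [b']$, and one may write $b \delta_p = \lambda b' u$ with $\lambda \in F^{\times}$ and $u \in \hat{\Oo}_F^{\times}$ (regarded as scalar $2 \times 2$ matrices). Then $\gamma_{b', b\delta_p} = \lambda I_2$, contributing a factor $\ov{\lambda}^{-2k}$ in the Maass formula for $f$ evaluated at $c_f(h, b\delta_p)$.

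Assembling these ingredients, the identity $c_g(h, b) = c_f(h, b\delta_p)$ collapses to
\[
c_g(h, b) = e^{-2\pi \tr h}\, \ov{\lambda}^{-2k} \sum_{\substack{d \in \bfZ_+ \\ d \mid \epsilon(h)}} d^{k-1}\, c_{b', f}(D_F d^{-2} \det h),
\]
so defining $c_{b, g}(n) := \ov{\lambda}^{-2k}\, c_{b', f}(n)$ verifies (\ref{Maass condition}) on every pair $(h, b)$ with $b \in \mB$, and Proposition \ref{to check} completes the argument. The only step requiring genuine care is the bookkeeping of the class shift $b \mapsto b'$ and the accompanying twist by $\ov{\lambda}^{-2k}$; everything else is essentially formal, because $\delta_p$ is a central scalar of norm one and therefore interacts trivially with every hermitian quantity appearing in (\ref{Maass condition}).
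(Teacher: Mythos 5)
Your proof is correct and takes essentially the same route as the paper's, which is a three-line observation that $\delta_p^*\delta_p=I_2$, hence $\epsilon(\delta_p^* h \delta_p)=\epsilon(h)$ and $\val_p(\det \delta_p^*\delta_p)=0$, so one may set $c_{b,g}=c_{b',f}$ with $b'=b\delta_p$. The only (cosmetic) difference is that you keep the original base $\mB$ and carry the explicit class-shift factor $\ov{\lambda}^{-2k}$, whereas the paper absorbs it by using the coefficient functions attached to the translated representative $b\delta_p$; the two bookkeeping conventions agree by the argument in Proposition \ref{independence}.
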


\begin{proof}
Let $\mB$ be as before. For $b \in \mB$ set $b' = b 
\delta_p$. One clearly has $\val_p( \delta_p^* \delta_p) = 0$ and 
$\epsilon(\delta_p^* h \delta_p) = \epsilon(h)$. Hence one can define 
\be \label{for2} c_{b,g}(D) = 
c_{b',f}(D).\ee The claim is now clear. \end{proof}

\section{Descent} \label{Descent}

Assume that $h_F$ is odd. Let $\chi_F$ be the quadratic Dirichlet 
character attached to the extension $F/\bfQ$. For a positive integer $n$, 
set $$a_F(n) = 
\# \{ \alpha \in (i D_F^{-1/2} \OF)/\OF \mid D_F N_{F/\bfQ}(\alpha) 
\equiv -n \pmod{D_F}\}.$$ Let $\mB$ be a base as in Corollary 
\ref{scalarcor}.

\begin{thm} \label{desc4} There exists a $\bfC$-linear injection of vector 
spaces 
$$\Desc_{\mB}: \mM_k^{\tuM} \hookrightarrow \prod_{b \in \mB} M_{k-1} 
(D_F, 
\chi_F),$$ 
such that $\Desc(f) = (F_b)_{b \in \mB}$ with $$a_{F_b}(n) = i 
\frac{a_{F}(n)}{\sqrt{D_F}} c_{b,f}(n),$$ where $a_{F_b}(n)$ is the 
$n$-th Fourier coefficient of $F_b$. The map $\Desc_{\mB}$ 
depends on the choice of $\mB$. \end{thm}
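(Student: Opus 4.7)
The plan is to chain two identifications already established in the paper and its references. First, since $h_F$ is odd, I would choose a base $\mB$ as in Corollary \ref{scalarcor}, so that Proposition \ref{oddclass} restricts $\Phi_{\mB}$ to a $\bfC$-linear isomorphism
$$\mM_k^{\tuM} \xrightarrow{\sim} \prod_{b\in\mB} \mM_k^{\tuM}(G(\bfZ)),\qquad f\mapsto (f_b)_{b\in\mB}.$$
As shown inside the proof of that proposition, the datum $\alpha_{f_b}$ controlling the classical Maass form $f_b$ via condition \eqref{kriegcond} equals $c_{b,f}$.

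Next, I invoke the classical Maass descent constructed in \cite{Kojima}, \cite{Gritsenko90} and \cite{Krieg91}: the inverse of the classical Maass lift is a $\bfC$-linear injection $\mM_k^{\tuM}(G(\bfZ))\hookrightarrow M_{k-1}(D_F,\chi_F)$ sending $\phi$ to the elliptic modular form whose $n$-th Fourier coefficient is $i \frac{a_F(n)}{\sqrt{D_F}} \alpha_\phi(n)$. Taking the product of these descents over $b\in\mB$ and precomposing with $\Phi_{\mB}$ produces the desired map $\Desc_{\mB}$. The stated formula for $a_{F_b}(n)$ is then a direct substitution using $\alpha_{f_b}=c_{b,f}$.

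For injectivity, $\Phi_{\mB}$ is already an isomorphism, while the classical descent is injective because \eqref{kriegcond} evaluated on $h\in T$ with $\epsilon(h)=1$ recovers $\alpha_\phi$ from $\phi$, and $\alpha_\phi$ is in turn recovered from the Fourier expansion of its image modular form on indices of the form $n=D_F\det h$ with $h\in T$, which support non-zero values of $a_F(n)$. The dependence of $\Desc_{\mB}$ on $\mB$ enters through the ordered set $\mB$ underlying $\Phi_{\mB}$.

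The only non-bookkeeping ingredient is the appeal to the classical descent of \cite{Krieg91}; given that result, there is no genuine obstacle. If one wanted to avoid citing \cite{Krieg91}, the main effort would go into re-proving the classical descent, which involves explicitly comparing the Fourier expansion of a Maass form with the theta lift construction of \cite{Kojima} — but I would simply quote the existing theorem.
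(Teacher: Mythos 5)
Your proposal is correct and follows essentially the same route as the paper: the paper's proof is precisely the reduction to Krieg's classical descent (his Theorem 6 and formula (4)) via the identification $\mM_k^{\tuM}\cong\prod_{b\in\mB}\mM_k^{\tuM}(G(\bfZ))$ afforded by the choice of $\mB$ from Corollary \ref{scalarcor} and formula (\ref{f12}), which is exactly what you spell out. Your added detail on injectivity (recovering $\alpha_\phi$ from the coefficients at $n=D_F\det h$ where $a_F(n)\neq 0$) is a correct elaboration of what the paper leaves implicit.
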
 

\begin{proof} This follows immediately from \cite{Krieg91}, Theorem 6 and 
formula (4) using our assumption on $\mB$ and (\ref{f12}). \end{proof}

\begin{rem} Krieg in \cite{Krieg91} explicitly describes the image of the 
descent map he defines and denotes it by $G_{k-1}(D_F, \chi_F)^*$. The 
image of $\Desc$ is exactly $\prod_{b \in \mB} G_{k-1}(D_F, \chi_F)^* 
\subset \prod_{b \in \mB} M_{k-1}(D_F, \chi_F)$. 
\end{rem}

Let $S_{\mB}$ denote the group of permutations of $\mB$. For $b \in \mB$, 
let $\sigma_{\fp, n} \in S_{\mB}$ be the unique permutation such that $b 
\a_p^n$ is
in the class of $\sigma_{\fp,n}(b)$. If $A=(a_b)_{b \in \mB}$ is an 
ordered tuple
indexed by elements of $\mB$, we define $\sigma_{\fp,n}A =
(a_{\sigma_{\fp,n}^{-1}(b)})_{b \in \mB}$. Moreover, for $b \in \mB$ 
and $n \in \bfZ$, write $\g_{b,\fp,n}$ for an element of $G(\bfQ)$ such 
that 
$b
\a_p^n = \g_{b,\fp,n} \sigma_{\fp,n}(b) \kappa$ for $\kappa \in K$, and 
denote 
by $\g_{\mB,\fp,n}$ the $\mB$-tuple $(\det\g_{b,\fp,n}^*)_{b \in \mB}$.

For 
a 
rational prime $p \nmid D_F$ denote by $T_p$ the 
operator acting on $\prod_{b \in \mB} M_{k-1}(D_F, \chi_F)$ which sends 
$(F_b)_{b \in \mB}$ to $(F'_b)_{b \in \mB}$, where
$F_b(z) = 
\sum_{n=1}^{\iy} a(n) e^{2 \pi i nz}$, $F'_b(z) = 
\sum_{n=1}^{\iy} a'(n) 
e^{2 \pi i nz}$ with $a'(n) = a(np) + \chi_F(p)p^{k-2} 
a(n/p)$. Here $a(m)=0$ if $m \not\in \bfZ_{\geq 0}$. Denote by 
$\bfT_p$ 
the 
$\bfC$-subalgebra of endomorphisms of $\prod_{b \in \mB} M_{k-1}(D_F, 
\chi_F)$ generated by $T_p$, the group $S_{\mB}$ and $\mB$-tuples of 
complex numbers acting on $\prod_{b \in \mB} M_{k-1}(D_F,
\chi_F)$ in an obvious way. 

\begin{thm} \label{heckedesc} Let $p$ be a rational prime which splits in 
$F/\bfQ$. There 
exists a $\bfC$-algebra map $$\Desc_{\mB,p}: \mH_p \rightarrow 
\bfT_p,$$ such 
that for 
every $H \in \mH_p$ the 
following diagram
$$\xymatrix@C7em{\mM_k \ar[r]^{H} \ar[d]^{\Desc_{\mB}} & \mM_k
\ar[d]_{\Desc_{\mB}} \\ \prod_{b \in \mB} M_{k-1} (D_F, \chi_F)
\ar[r]^{\Desc_{\mB,p}(H)} & \prod_{b \in \mB} M_{k-1} (D_F,
\chi_F)}$$ commutes. Moreover, one has \be \begin{split} 
\Desc_{\mB,p}(T_{\fp}) & = (\gamma_{\mB,\fp,1})^{-k} p^2 (p+1) T_p \circ
\sigma_{\fp,1}, \\
\Desc_{\mB,p}(U_{\fp})& =(\gamma_{\mB,\fp,2})^{-k} p^4( T^2_p   
+p^{k-1}+p^{k-3})\circ
\sigma_{\fp,2} \\
\Desc_{\mB,p}(\Delta_{\fp}) &= 
(\gamma_{\mB,\fp,4})^{-k}\sigma_{\fp,4}.\end{split} \ee

\end{thm}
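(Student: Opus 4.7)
The plan is to reduce to the four generators of $\mH_p$ furnished by Lemma \ref{generation}. For each of $T_{\fp}$, $T_{\ov{\fp}}$, $U_{\fp}$, $\Delta_{\fp}$ I would \emph{define} $\Desc_{\mB,p}$ by the formula asserted in the theorem and then verify directly that the displayed square commutes when evaluated on an arbitrary $f \in \mM_k^{\tuM}$. Once this is checked, the extension to a $\bfC$-algebra homomorphism on all of $\mH_p$ is automatic: since $\Desc_{\mB}$ is injective and $\mM_k^{\tuM}$ is $\mH_p$-stable by Theorem \ref{thmmain}, every relation holding in $\mH_p$ forces the same relation on the images in $\bfT_p$ when restricted to $\Desc_{\mB}(\mM_k^{\tuM})$, and the explicit right-hand sides $(\gamma_{\mB,\fp,n})^{-k}(\cdots)\circ\sigma_{\fp,n}$ visibly extend this action to all of $\prod_b M_{k-1}(D_F,\chi_F)$.

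For the unfolding step, for each generator $H$ I would take $g=Hf$ and $b\in\mB$, then combine three inputs. First, Propositions \ref{invariance}, \ref{invariance2}, \ref{invariance3} (formulas (\ref{maass1}), (\ref{for1}), (\ref{for2})) express $c_{b,g}(D)$ as an explicit linear combination of $c_{b',f}$-values, where $b'=b\alpha_p^n$ for $n=1,2,4$ respectively. Second, because $b'$ typically lies outside $\mB$, the change-of-base relation from the proof of Proposition \ref{independence} rewrites $c_{b',f}=\ov{\alpha_{\sigma_{\fp,n}(b),b'}}^{-k}c_{\sigma_{\fp,n}(b),f}$, and an easy inspection identifies this prefactor with $(\det\gamma_{b,\fp,n}^*)^{-k}$, producing the $(\gamma_{\mB,\fp,n})^{-k}$ and $\sigma_{\fp,n}$ factors on the target side. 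Third, Theorem \ref{desc4}'s descent formula $a_{F_b}(m)=i\,a_F(m)c_{b,f}(m)/\sqrt{D_F}$ converts $c_{\sigma_{\fp,n}(b),f}(m)$ into $a_{F_{\sigma_{\fp,n}(b)}}(m)$ up to $a_F$-ratios, and these ratios are exactly what is needed to recognize the classical Hecke operator $T_p$ in the output.

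For $\Delta_{\fp}$, formula (\ref{for2}) makes the match immediate, since only the base-change scalar survives. For $T_{\fp}$, carrying out the three steps on (\ref{maass1}) produces $p^2(p+1)\cdot(\gamma_{\mB,\fp,1})^{-k}$ times the $n$-th Fourier coefficient of $T_p(F_{\sigma_{\fp,1}(b)})$, using $\chi_F(p)=1$ (since $p$ is split and $p\nmid D_F$). The $T_{\ov{\fp}}$ case is symmetric: one replaces Lemma \ref{decomp1} by the analogous coset decomposition with the roles of $\fp$ and $\ov{\fp}$ swapped, and the argument goes through unchanged. For $U_{\fp}$ one matches the three-branch formula (\ref{for1}) against the Fourier expansion of $p^4(T_p^2 + p^{k-1} + p^{k-3})(F)$; computing
\[
T_p^2(F)_n = a(np^2) + 2p^{k-2}a(n) + p^{2k-4}a(n/p^2) \qquad (\chi_F(p)=1),
\]
shifting by $(p^{k-1}+p^{k-3})a(n)$, and scaling by $p^4$ gives three terms at $np^2$, $n$, $n/p^2$ to compare with (\ref{for1}).

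The main obstacle is the numerological matching for $U_{\fp}$. The coefficient of $c_{b',f}(D)$ in (\ref{for1}) carries a case split according to whether $p\mid D$ and $p^2\mid D$, and these piecewise corrections must exactly compensate the corresponding variation of $a_F(D p^i)/a_F(D)$ generated by the descent formula, so that the aggregate operator on modular forms is the single uniform expression $p^4(T_p^2+p^{k-1}+p^{k-3})$ independent of the divisibility branch. This is a finite but delicate bookkeeping verification, relying on the explicit definition of $a_F(n)$ at a split prime $p\nmid D_F$, and constitutes the technical heart of the Hecke-equivariance statement.
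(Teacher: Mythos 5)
Your proposal follows essentially the same route as the paper: the author likewise reads off $c_{b,g}$ from the formulas (\ref{maass1}), (\ref{for1}), (\ref{for2}) established in the invariance propositions, rewrites $c_{b',f}$ in terms of $c_{\sigma_{\fp,n}(b),f}$ via a base-change scalar identified with $(\det\gamma_{b,\fp,n}^*)^{-k}$, and then invokes Theorem \ref{desc4} to recognize the classical operator $T_p$ (the paper carries this out explicitly only for $T_{\fp}$ and leaves the $U_{\fp}$ and $\Delta_{\fp}$ cases, including the bookkeeping you flag, to the reader). Your additional remarks on extending to an algebra homomorphism via injectivity of $\Desc_{\mB}$ and on the $a_F$-ratio matching are consistent with, and slightly more explicit than, what the paper records.
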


\begin{proof} This follows from Theorem \ref{desc4},
and formulas (\ref{maass1}), (\ref{for1}), and (\ref{for2}). 
Just for illustration, we include the argument in the case of $T_{\fp}$. 
Let $f \in \mM_k^{\tuM}$ and set $g=T_{\fp}f$. Fix $b \in \mB$ and write 
$b_1$ for $\sigma_{\fp,1}(b)$ and $b'$ for $b 
\a_{p}$. There exist diagonal matrices $\g \in G(\bfQ)$ and $\kappa \in K$ 
such that $b'=\g b_1 \kappa$ (hence also $b_1 = \g^{-1} b' \kappa^{-1}$). 
Identify $\mM^{\tuM}_k$ with $\prod_{b \in \mB} \mM_k^{\tuM}(G(\bfZ))$ via 
$f' \mapsto (f'_b)_{b \in \mB}$. For $\phi \in \mM_k^{\tuM}(G(\bfZ))$, $h 
\in T$, denote 
by $c_{\phi}(h)$ the $h$-Fourier coefficient of $\phi$. We will study the 
action of $T_{\fp}$ on $c_{f_{b_1}}(h)$. Since $f$ is a Maass form it is 
enough to consider $h$ of the form $\bsmat 1&* \\ *&*\esmat$. Fix such an 
$h$. Set $D=D_F \det h$. Then by (\ref{f12}) and (\ref{maass1}), 
$$c_{g_b}(h) = e^{2 \pi \tr h} c_g(h,b) = p^2 (p+1) (c_{b',f}(Dp) + 
p^{k-2} c_{b',f}(Dp^{-1})).$$ By (\ref{f12}) and (\ref{Maass condition}), 
we have \be \begin{split} c_{f_{b_1}}\left(h \bsmat 1 \\ & p \esmat 
\right) & 
= e^{2 \pi \tr 
h \bsmat 1 \\ & p \esmat} c_f \left( h \bsmat 1 \\ & p \esmat, b_1 \right) 
= 
c_{b_1,f}(Dp) = 
(\det \g^*)^k c_{b',f}(Dp)\\
 c_{f_{b_1}}\left(h \bsmat 1 \\ & 1/p \esmat \right) & = e^{2 \pi \tr
h \bsmat 1 \\ & 1/p \esmat} c_f \left( h \bsmat 1 \\ & 1/p \esmat, 
b_1 
\right) = c_{b_1,f}(Dp^{-1}) =\\
&= (\det \g^*)^k c_{b',f}(Dp^{-1}),\end{split} \ee where we have used the 
fact that $\epsilon(h) = \epsilon\left(h \bsmat 1 \\ & p \esmat\right) = 
\epsilon\left(h \bsmat 1 \\ & 1/p \esmat\right)=1$ for $h$ as above 
(the last equality holding for $h$ such that $h \bsmat 1 \\ & 1/p
\esmat \in T$). This gives us $$c_{g_b}(h) = p^2 (p+1) (\det \g^*)^{-k} 
(c_{b_1,f}(Dp) + p^{k-2} c_{b_1,f}(Dp^{-1})).$$ The claim now follows from 
Theorem \ref{desc4}.
\end{proof}

For completeness we also include the analogue of Theorem \ref{heckedesc} 
for an inert 
$p$. It can be proved in the same way or can be 
deduced 
from the results of section 3 of \cite{Gritsenko90}.

Let $p$ be an inert prime. The Hecke algebra $\mH_p$ is generated by the 
double cosets $T_{\fp}:=K_p \diag(p^{-1}, 1, p, 1 ) K_p$ and $U_{\fp}:=K_p 
 \diag (p^{-1}, p^{-1}, p,p ) K_p$. 

\begin{thm} \label{heckedesc2} Let $p$ be a rational prime which is inert  in
$F/\bfQ$. There
exists a $\bfC$-algebra map $$\Desc_{\mB,p}: \mH_p \rightarrow \bfT_p,$$ 
such
that for
every $H \in \mH_p$ the
following diagram
$$\xymatrix@C7em{\mM_k \ar[r]^{H} \ar[d]^{\Desc_{\mB}} & \mM_k
\ar[d]_{\Desc_{\mB}} \\ \prod_{b \in \mB} M_{k-1} (D_F, \chi_F)
\ar[r]^{\Desc_{\mB,p}(H)} & \prod_{b \in \mB} M_{k-1} (D_F,
\chi_F)}$$ commutes.  Moreover, one has \be \begin{split}
\Desc_{\mB,p}(T_{\fp}) & = p^{-k+4}(p^2+1) T_p^2+ p^4 + p^3+ p 
- 1,\\
\Desc_{\mB,p}(U_{\fp}) & = p^8(T_p^4 + (p+3)p^{k-2} T_p^2 + 
p^{2k-4}(p^2+p+1)).\end{split} \ee \end{thm}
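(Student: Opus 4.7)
The plan is to mimic the argument of Theorem \ref{heckedesc} with two simplifications that are special to the inert case: the operators $T_{\fp},U_{\fp}$ preserve the decomposition \eqref{prod1}, and the scalar $p\in\bfZ$ (viewed adelically) already lies in $\bfQ^\times \hat{\Oo}_F^\times$, so translation by $\a_p$-type matrices does not permute the classes in $\mB$ (this is the observation already invoked in the proof of Theorem \ref{thmmain}). Consequently, on the descent side no permutation $\sigma_{\fp,n}$ and no correction $\g_{\mB,\fp,n}$ appears; the whole statement reduces to identities inside $\End(M_{k-1}(D_F,\chi_F))$ for each component separately.

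The concrete steps I would carry out are as follows. First, I would write down Iwasawa-type single coset decompositions
\bes
T_{\fp} = \bigsqcup K_p\a,\qquad U_{\fp}=\bigsqcup K_p\b,
\ees
as Krieg does in the classical setting (these are the $\U(2,2)(\bfQ_p)$-analogues of the decompositions in Lemmas \ref{decomp1} for the split case). Then I would compute, exactly as in Lemma \ref{Fourier1}, the effect on the Fourier coefficients $c_f(h,q)$ in terms of sums of $c_f(h,q\cdot\mu)$ for scalar or near-scalar matrices $\mu$. Because $p$ is inert, each relevant $\mu$ satisfies $\mu=\g_\mu\,\kappa_\mu$ with $\g_\mu\in\GL_2(F)$ scalar and $\kappa_\mu\in K'$, so by Lemma \ref{sameclass} the class in $\mB$ is unchanged and the factor $(\det\g_\mu^*)^{-k}$ contributes only a power of $p$. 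Using the Maass condition \eqref{Maass condition} and Proposition \ref{diagonal} (in the form of Krieg's Proposition 7 for inert primes) to control $\epsilon(\mu^* h\mu)$ in terms of $\epsilon(h)$, I would arrive at identities of the shape
\bes
c_{(T_{\fp}f)_b}(h) = \text{(polynomial in $p^{k-1},p^{-k+1},\dots$)}\cdot \text{sums of }c_{b,f}(Dp^j),
\ees
and analogously for $U_{\fp}$. Defining $c_{b,\,T_{\fp}f}$ and $c_{b,\,U_{\fp}f}$ by the corresponding combinations of $c_{b,f}$ (the inert analogues of \eqref{maass1} and \eqref{for1}), Proposition \ref{to check} then shows that $T_{\fp}f$ and $U_{\fp}f$ lie in $\mM_k^{\tuM}$, which is of course also the content of Theorem \ref{thmmain} at an inert prime.

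Finally, to identify the resulting operator on $M_{k-1}(D_F,\chi_F)$, I would compare the formula for $c_{b,\,T_\fp f}(D)$ with the effect of $T_p$ on the Fourier coefficients of $F_b=\Desc_{\mB}(f)_b$ given in Theorem \ref{desc4}, namely $a_{T_p F_b}(n)=a_{F_b}(np)+\chi_F(p)p^{k-2}a_{F_b}(n/p)$. A short bookkeeping calculation, using $a_F(n)/\sqrt{D_F}$-compatibility from Theorem \ref{desc4} and the fact that $\chi_F(p)=-1$ when $p$ is inert (so terms with $p^{k-2}$ acquire the expected sign and several terms telescope), gives the stated polynomial identity
\bes
\Desc_{\mB,p}(T_\fp)=p^{-k+4}(p^2+1)T_p^2+p^4+p^3+p-1,
\ees
and analogously the quartic identity for $U_\fp$. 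The multiplicativity of $\Desc_{\mB,p}$ is then automatic from its definition via the commutative diagram, since $\mH_p$ is generated by $T_\fp$ and $U_\fp$.

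The only genuinely non-routine step is the bookkeeping in the last paragraph: tracking which sums $\sum_n A_d^{(m)}$ survive in each of the residue cases (\,$p\mid D$ vs.\ $p\nmid D$, $p\mid\epsilon(h)$ vs.\ $p\nmid\epsilon(h)$\,) and checking that the resulting expression in $c_{b,f}$ matches the polynomial in $T_p$ on the nose. This is where I would spend most of the effort; once the $T_\fp$ case is organised, $U_\fp$ is mechanical (and can alternatively be read off from Section~3 of \cite{Gritsenko90} after accounting for the difference between the $\U(2,2)$ and $\GU(2,2)$ Hecke algebras, as remarked in the introduction).
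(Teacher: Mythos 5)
Your proposal follows exactly the route the paper itself takes: the paper's entire ``proof'' of Theorem \ref{heckedesc2} is the one-line remark that it can be proved in the same way as Theorem \ref{heckedesc} or deduced from Section 3 of \cite{Gritsenko90}, and your plan --- exploiting that for inert $p$ the Hecke operators respect the decomposition (\ref{prod1}) (so no permutation $\sigma$ or correction $\g_{\mB,\fp,n}$ arises, since $(p)$ is principal) and then redoing the coset/Fourier-coefficient bookkeeping of the split case against Theorem \ref{desc4} --- is precisely what ``in the same way'' means. Your identification of the remaining work as the case-by-case verification of the explicit polynomials in $T_p$ is accurate; the paper supplies no more detail than you do.
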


\bibliography{standard1}

\providecommand{\bysame}{\leavevmode\hbox to3em{\hrulefill}\thinspace}
\providecommand{\MR}{\relax\ifhmode\unskip\space\fi MR }
\providecommand{\MRhref}[2]{%
  \href{http://www.ams.org/mathscinet-getitem?mr=#1}{#2}
}
\providecommand{\href}[2]{#2}
\begin{thebibliography}{10}

\bibitem{Andrianov79}
Anatolii~N. Andrianov, \emph{Modular descent and the {S}aito-{K}urokawa
  conjecture}, Invent. Math. \textbf{53} (1979), no.~3, 267--280.

\bibitem{Bump97}
Daniel Bump, \emph{Automorphic forms and representations}, Cambridge Studies in
  Advanced Mathematics, vol.~55, Cambridge University Press, Cambridge, 1997.

\bibitem{Gritsenko90}
V.~A. Gritsenko, \emph{The {M}aass space for {${\rm SU}(2,2)$}. {T}he {H}ecke
  ring, and zeta functions}, Trudy Mat. Inst. Steklov. \textbf{183} (1990),
  68--78, 223--225, Translated in Proc.\ Steklov Inst.\ Math.\ {\bf 1991}, no.\
  4, 75--86, Galois theory, rings, algebraic groups and their applications
  (Russian).

\bibitem{Ikedapreprint2005}
T.~Ikeda, \emph{On the lifting of hermitian modular forms}, Preprint (2007).

\bibitem{Klosin06preprint}
Krzysztof Klosin, \emph{Congruences among automorphic forms on {${\rm U}(2,2)$}
  and the {B}loch-{K}ato conjecture}, preprint (2006).

\bibitem{Kojima}
Hisashi Kojima, \emph{An arithmetic of {H}ermitian modular forms of degree
  two}, Invent. Math. \textbf{69} (1982), no.~2, 217--227.

\bibitem{Krieg91}
Aloys Krieg, \emph{The {M}aa\ss\ spaces on the {H}ermitian half-space of degree
  {$2$}}, Math. Ann. \textbf{289} (1991), no.~4, 663--681.

\bibitem{Kurokawa78}
Nobushige Kurokawa, \emph{Examples of eigenvalues of {H}ecke operators on
  {S}iegel cusp forms of degree two}, Invent. Math. \textbf{49} (1978), no.~2,
  149--165.

\bibitem{Maass79}
Hans Maass, \emph{\"{U}ber eine {S}pezialschar von {M}odulformen zweiten
  {G}rades. {II}}, Invent. Math. \textbf{53} (1979), no.~3, 249--253.

\bibitem{Piatetski-Shapiro83}
I.~I. Piatetski-Shapiro, \emph{On the {S}aito-{K}urokawa lifting}, Invent.
  Math. \textbf{71} (1983), no.~2, 309--338.

\bibitem{Serre70}
Jean-Pierre Serre, \emph{Le probl\`eme des groupes de congruence pour {SL}2},
  Ann. of Math. (2) \textbf{92} (1970), 489--527.

\bibitem{Shimura97}
Goro Shimura, \emph{Euler products and {E}isenstein series}, CBMS Regional
  Conference Series in Mathematics, vol.~93, Published for the Conference Board
  of the Mathematical Sciences, Washington, DC, 1997.

\bibitem{Zagier81}
D.~Zagier, \emph{Sur la conjecture de {S}aito-{K}urokawa (d'apr\`es {H}.
  {M}aass)}, Seminar on Number Theory, Paris 1979--80, Progr. Math., vol.~12,
  Birkh\"auser Boston, Mass., 1981, pp.~371--394.

\end{thebibliography}

\end{document}